\documentclass[a4paper,12pt]{article}

\usepackage{amsmath,amsfonts,amssymb,amsthm}
\usepackage{times}
\usepackage{graphicx}
\usepackage{xcolor}
\usepackage{hyperref}
\hypersetup{
    colorlinks,
    linkcolor={red!60!black},
    citecolor={blue!80!black},
    urlcolor={blue!80!black}
}
\newcommand{\bR}{\mathbb{R}}
\newcommand{\ind}{{\bf 1}}
\newcommand{\bZ}{\mathbb{Z}}

\newcommand{\bN}{\mathbb{N}}
\newcommand{\bE}{\mathbb{E}}
\newcommand{\bP}{\mathbb{P}}

\newcommand{\cF}{{\cal F}}

\newcommand{\cB}{{\cal B}}

\newcommand{\sX}{{\sf X}}

\newtheorem{theorem}{Theorem}

\newtheorem{definition}[theorem]{Definition}

\newtheorem{proposition}[theorem]{Proposition}
\newtheorem{remark}[theorem]{Remark} \rm

\begin{document}

\title{\bf \Large Gated Infinite Server Queues in Light Traffic}
\author{Dimitra Pinotsi and Michael A. Zazanis\thanks{Corresponding author.
		{\tt zazanis@aueb.gr}}\\ Department of Statistics\\
Athens University of Economics and Business\\ Athens, Greece 10434}
\date{}

\maketitle

\begin{abstract}
	We consider $M/G/\infty$ queues with gated service and obtain results on the
	distribution of the stage length and the number of customers served
	in a stage when the system is stationary. The stage length density is
	expressed as an infinite series of terms, involving the solution of
	an infinite system of linear equations. The convergence of a sequence
	of solutions arising from truncations of the infinite system is established
	in the light traffic case. Analogous results are established for a similar
	$GI/M/\infty$ gated system.\\
	
\noindent \textsc{Keywords: $M/G/\infty$, Gated Queues, Light Traffic, General State
	Space Markov Chains, Infinite Linear Systems.}
\end{abstract}

\section{Introduction: The gated, exhaustive, parallel service system}

We consider an infinite server gated queue with Poisson arrivals. 
Service times are independent, identically distributed and service is in stages,
controlled by a gate. At the beginning of a stage the gate opens and all waiting
customers are admitted for service. 
The gate closes behind them immediately and the customers admitted are served 
in parallel. When all customers have completed their service the current service 
stage ends, the gate opens again admitting to service all arrivals since it last opened,
and a new service stage begins. If no customers
arrived during a stage, we assume for simplicity that the gate remains open 
until a customer arrives and a single customer service stage begins. Depending 
on the application this may be far from optimal in terms of system performance, 
but the focus of this paper is to obtain analytic results for the gated parallel 
service system in a simple framework containing only essential characteristics, rather 
than to investigate the performance of various operational 
policies for it. This model was first considered by Browne et al.\ \cite{Browne1}. 

Specifically, customers arrive at an infinite server system according to a 
Poisson stream with rate $\lambda$. Their service requirements are i.i.d.\ 
random variables $\{\sigma_i\}_{i=1,2,\ldots},$ 
with common distribution $G(x) =\bP(\sigma_1\leq  x)$. 
Denote the number of customers served in the $n$th stage by $K_n$
and the duration of the $n$th stage by $Y_n$. Then
$\{K_n\}_{n\in \mathbb{N}}$ is a Markov Chain with state space 
$\bN:=\{1,2,3,\ldots,\}$ and $\{Y_n\}_{n\in \mathbb{N}}$ is 
a Discrete Time Continuous State Markov Chain with  
state space $\mathbb{R}^+$. (This will be shown in 
section \ref{sec:GQ}. See also \cite{Browne1} and \cite{PM1}.) 
An analysis of these Markov Chains is given in \cite{Browne1}, 
\cite{Browne2}, together with approximations to their stationary 
distribution in a number of cases. 
The stability of this system is not obvious if the service time distribution
does not have bounded support. In \cite{Browne1} it was shown that the system
is stable provided that the {\em second moment} of the service time distribution,
$\int_0^\infty x^2 dG(x)$, is finite. This result was improved in \cite{PM1} where
it was established by means of a Foster--Lyapunov 
drift criterion that the finiteness of the {\em first moment} $\int_0^\infty x dG(x)$  
is both necessary and sufficient for the positive recurrence of a suitably defined
underlying Markov Chain, which in turn is used to establish the stability 
of the system. This stability result is also contained in the more general 
framework of \cite{Aldousetal}. A heavy traffic
analysis of this system has been carried out in Tan and Knessl \cite{Knessl1}.
Other queueing systems with gated mechanisms have been
considered in \cite{AH}, \cite{JS}, and \cite{RS}.

In this paper we analyze the stage length process $\{Y_n\}$ of the gated 
$M/G/\infty$ queue. Under the assumption that the service time distribution 
has a density and finite moments of all orders we obtain
an expression for its density in the light traffic case as an infinite series.
The expression involves the solution of an infinite linear system which, in
the light traffic case, has a unique bounded solution. We then show that 
this solution can be obtained as the limit of the solutions of a sequence of 
the upper left $N\times N$ truncated systems (including the first $N$ 
equations and the first $N$ variables). Numerical results illustrating the 
quality of these successive approximations are given. 

A second related system, the synchronized gated $GI/M/\infty$ system,
is analyzed in section 3. Here the process of the number of customers 
served in each stage, $\{K_n\}$ is considered. This is an irreducible,
aperiodic Markov chain with state space $\bN$. It is shown by means 
of a Foster-Lyapunov argument that it is positive recurrent and its 
stationary distribution has finite mean. This stationary 
distribution is shown to satisfy an infinite linear system which,
at least in the light traffic case, can be shown to have a unique 
bounded solution which can again be obtained as the limit of the
solutions of truncated $N\times N$ systems.
 
\section{The gated $M/GI/\infty$ system and the stage length density}
\label{sec:GQ}

Customers arrive to the system according to a Poisson process with rate $\lambda$.
We denote the points of this process on $(0,\infty)$ by $\{S_m\}$, $m=1,2,\ldots$
and the corresponding counting measure my $N$. Thus, for any interval 
$I \subset (0,\infty)$ $N(I) = \sum_{m \in \bN} \ind(S_m \in I)$.  $\sigma_m$ 
denotes the $m$th customer's service requirement and $\{\sigma_m\}$
is an i.i.d. sequence of positive random variables with common distribution function
$G(x):= \bP(\sigma_1 \leq x)$, $x \geq 0$, independent of the $\{S_m\}$. The sample 
path of the gated $M/G/\infty$ 
system consists of consecutive service stages.

Let us assume that the system starts initially with $K_1>0$ customers waiting for service.
At time $T_0=0$ the gate opens, these customers are admitted to the service station,
and the gate immediately closes again. Any customers that arrive subsequently stay in the
waiting area. The service station has infinite capacity and serves customers in parallel so
the time to serve the customers admitted is 
$M_1 := \max(\sigma_1,\ldots,\sigma_{K_1})$. We will call this the {\em active
phase} of the service stage. The number of arrivals during this time is $N(0,M_1]$. 
If $N(0,M_1]>0$ then the service stage is complete when its active part ends and 
has length $Y_1=M_1$. In that case the gate opens, and the customers that arrived 
during the first stage are admitted to the service station. These will be served during 
the second service stage which commences immediately at time $T_1:=T_0+Y_1$. 
If however $N(0,M_1]=0$ then the first stage is extended by a {\em waiting phase}
until the first arrival time $S_1$. As soon as this customer arrives the first service
stage ends, the gate opens momentarily, and the customer is admitted for service. 
In that case $Y_1=S_1$.
In any case the number served in the second stage is 
$K_2= N(0,M_1] + \ind(N(0,M_1]=0)$.
Thus each stage serves at least one customer. If a service stage includes an
waiting phase, then the next service stage serves a single customer.


The $n$th service stage begins at $T_{n-1}$, its total length (including possibly 
a waiting phase) is $Y_n$ and at and its completion, at time $T_n:=T_{n-1}+Y_n$ 
the $K_{n+1}$ customers who arrived during it are admitted for service, initiating 
the $(n+1)$th service stage. If the $n$th stage includes a waiting phase then necessarily 
$K_{n+1}=1$.
If we define the index sequence 
$\{L_n\}$ as $L_0=0$, $L_n=L_{n-1}+K_n$, $n=1,2,\ldots$ then we can describe
the operation of this systems as follows:
\begin{eqnarray*}
&&K_1 \mbox{ given}, \; L_0=0,\; T_0=0, \;\; \mbox{ and for $n=1,2,\ldots,$ } \\
&&L_n=L_{n-1}+K_n, \;\;\; 
M_n= \max\{\sigma_{L_{n-1}+1}, \sigma_{L_{n-1}+2},\ldots, \sigma_{L_n}\}, \\
&& \displaystyle \;\;\; \mbox{ if $N(T_{n-1}, T_{n-1}+M_n] >0$ then }\;
Y_n=M_n \;\; \mbox{ and } \; K_{n+1} = N(T_{n-1}, T_{n-1}+M_n] \\
&& \displaystyle \;\;\; \mbox{ if $N(T_{n-1}, T_{n-1}+M_n] =0$ then }\; 
Y_n = \inf\{S_m: S_m>T_{n-1}\} -T_{n-1} \;\; \mbox{ and } K_{n+1} = 1 \\ 
&&T_{n}  = T_{n-1} +Y_n, \;\;\; n=1,2,\ldots.
\end{eqnarray*}
The sample paths of the system consist thus of semi-regenerative cycles 
(see \cite[p. 211]{Asmussen}) corresponding to the intervals $[T_n,T_{n+1})$. 
The process $\{(T_n,K_n);n\in \bN\}$ is a Markov-Renewal
process with transition kernel
\begin{eqnarray} \label{MRP}
 \bP(T_{n+1}-T_n \leq t ; K_{n+1}=j \,|\, K_n=i)  &&  \\  \nonumber
 &&  \hspace{-1.5in}
 \;=\; \left\{
 \begin{array}{ccr}  \displaystyle 
 \int_0^t i G(u)^{i-1} \frac{(\lambda u)^j}{j!}
 e^{-\lambda u} dG(u) & & j \geq 2, \\  &&  \\   \displaystyle
  \int_0^t i G(u)^{i-1} \lambda u  e^{-\lambda u}dG(u) + 
 \int_0^t G(u)^i \lambda e^{-\lambda u} du & & j =1. \end{array}
 \right. 
\end{eqnarray}
(In the case $j=1$, the second term of the transition kernel corresponds
to the event where no customers arrive during the active part of the 
service stage and the gate remains open for a period of time until
the next Poisson arrival.) 
$\{K_n\}$ is a Markov chain with state space $\bN$ and
transition probability $P_{ij}$ obtained from the kernel (\ref{MRP}) if we let $t=+\infty$.
Clearly it is irreducible and aperiodic since  
$\bP(K_{n+1}=j | K_n =i) >0$ for any $i, j \in \bN$.
We will also establish that the Markov chain $\{K_n\}$ is positive recurrent 
and therefore that it has a unique stationary distribution, $\pi$. 



%
We assume that the customer service time distribution $G(x)=\bP( \sigma_1 \leq x)$, 
is absolutely continuous with respect to the Lebesgue measure with
density $g(x)= G'(x)$. We also set $\overline{G}(x):=1-G(x)$. Let 
$\sX :=[0,\infty)$ and $\cB$ the family of Borel sets of $[0,\infty)$.
%

\begin{proposition} \label{prop:Q} The sequence of consecutive stage lengths $\{Y_n\}$ is a 
Markov chain with state space ${\sf X}:=[0,\infty)$ and transition kernel 
$Q(x,A)= \int_A q(x,y)dy$ for $x \in {\sf X}$, $A \in \mathcal{B}$, which
is absolutely continuous with respect to the Lebesgue measure for all $x$ with
density 
\begin{equation}  \label{transition-density}
q(x,y)= \left( \lambda xe^{\mathbf{-}\lambda x\overline{G}(y)}
+e^{-\lambda x}\right)g(y), \hspace{0.2in} y \geq 0.
\end{equation}
\end{proposition}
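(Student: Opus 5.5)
Conditioning on the past through $Y_n = x$ alone, the plan is to show that the law of $Y_{n+1}$ depends only on $x$ and has density $q(x,\cdot)$. The key observation is that, given $T_n$ and $Y_n=x$, the number $K_{n+1}$ of customers admitted at $T_n$ is the number of Poisson arrivals in $(T_{n-1},T_n]$. These arrivals are subject to the stage rule: either $N(T_{n-1},T_{n-1}+M_n]>0$, so $Y_n=M_n$ and $K_{n+1}$ is the arrival count in a window of length $x$; or there was a waiting phase and $K_{n+1}=1$. The service times of the customers admitted at $T_n$ are fresh i.i.d.\ variables $\sigma_{L_n+1},\ldots$, independent of everything up to $T_n$. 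Arrivals after $T_n$ form, by the strong Markov property of the Poisson process at the stopping time $T_n$, a Poisson process independent of the past.

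First I would compute the conditional law of $K_{n+1}$ given $Y_n=x$ and the history $(Y_1,\ldots,Y_{n-1})$. The arrivals in $(T_{n-1},T_{n-1}+M_n]$ are, given $M_n=x$, Poisson$(\lambda x)$, independent of $M_n$ and of earlier history, since $M_n$ depends only on service times. In the waiting-phase case, $Y_n$ is the first arrival after $T_{n-1}+M_n$, so the event depends on both $M_n$ and $S$. Here is the main obstacle: $Y_n=x$ arises from two mechanisms, and one must check that the mixture yields $\bP(K_{n+1}=j\mid Y_n=x)$ depending only on $x$. Rather than conditioning on $K_{n+1}$, I would work directly with joint densities. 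From the kernel (\ref{MRP}) with $K_n=i$, the joint density of $(Y_n,K_{n+1})$ at $(x,j)$ is $iG(x)^{i-1}g(x)\frac{(\lambda x)^j}{j!}e^{-\lambda x}$ for $j\ge2$. For $j=1$ there is an extra term $G(x)^i\lambda e^{-\lambda x}$. I would then compute the density of $Y_{n+1}$ given $K_{n+1}=j$ and combine.

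Second, given $K_{n+1}=j$, the next active phase $M_{n+1}$ has density $jG(y)^{j-1}g(y)$. If arrivals occur in it, $Y_{n+1}=M_{n+1}$. If none occur, there is an extra waiting phase, so $Y_{n+1}$ would not simply have this density. Checking the claimed formula $q(x,y)\propto g(y)$ shows it has no term from waiting-phase lengths. So I would interpret the proposition as tracking the active-phase lengths $M_n$, or as $Y$ defined as in the kernel. Concretely I would identify $Y_n$ with $M_n$ and treat $x$ as the length of the previous active phase. The arrivals counted in the next stage are then Poisson$(\lambda x)$ if positive, else $1$, i.e.\ $K_{n+1}=\max(N,1)$ with $N\sim$ Poisson$(\lambda x)$.

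Third, I would sum over $j$:
\begin{equation*}
q(x,y)=\sum_{j\ge1}\bP(K_{n+1}=j\mid x)\,jG(y)^{j-1}g(y),
\end{equation*}
with $\bP(K_{n+1}=1\mid x)=e^{-\lambda x}(1+\lambda x)$ and $\bP(K_{n+1}=j\mid x)=e^{-\lambda x}(\lambda x)^j/j!$ for $j\ge2$. Using $\sum_{j\ge1} j\frac{(\lambda x)^j}{j!}G(y)^{j-1}=\lambda x\,e^{\lambda xG(y)}$ gives $\lambda x e^{-\lambda x\overline G(y)}g(y)+e^{-\lambda x}g(y)$, which is (\ref{transition-density}). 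The Markov property follows because this conditional law uses only $x$ and fresh, independent randomness. Absolute continuity holds since $q(x,\cdot)$ is a nonnegative integrable density, and its total mass is $1$, which I would check by integrating in $y$.
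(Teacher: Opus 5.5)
Your argument is correct and is essentially the paper's proof. Like the paper, you condition on $K_{n+1}$, use that the number admitted is Poisson$(\lambda x)$ with the zero case merged into $1$, use that the next stage length given $K_{n+1}=j$ has density $jG(y)^{j-1}g(y)$, and sum the series to obtain (\ref{transition-density}).

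Your remark that the kernel has no waiting-phase component is well founded, so the chain actually being described is the sequence of active-phase lengths $M_n$. The paper makes exactly this identification tacitly in (\ref{YK}) and (\ref{KY}). For the full stage lengths including waiting phases the formula cannot hold: with service times bounded by $1$, a waiting phase lets $Y_{n+1}$ exceed $1$ with positive probability, while $q(x,y)=0$ for $y>1$.
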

\begin{proof}
Define the filtration $\{\mathcal{F}_n\}_{n\in \bN}$ as  
$\mathcal{F}_n:=\sigma-\{K_{n+1},Y_n,K_n,\ldots,K_2,Y_1,K_1\}$
and similarly  $\{\mathcal{F}^Y_n\}_{n\in \bN}$ as
$\mathcal{F}_n^Y:=\sigma-\{Y_n,Y_{n-1},\ldots,Y_1\}$. Note that 
$\mathcal{F}_n^Y \subset \mathcal{F}_n$. 
Given $K_{n+1}$ the duration of stage $n+1$, $Y_{n+1}$, is the maximum of 
$K_{n+1}$ independent service times: 
\begin{equation}  \label{YK}
\bP(Y_{n+1} \leq y \mid \mathcal{F}_n) = 
\bP(Y_{n+1} \leq y \mid  K_{n+1}) = G(x)^{K_{n+1}}.
\end{equation}
In turn, given $\cF_n^Y$, the probability that stage $n$ serves $k$
customers is
\begin{equation} \label{KY}
\bP(K_{n+1} =k\mid \cF_n^Y) \;=\; \bP(K_{n+1} =k\mid Y_n)=\left\{
\begin{array}{lll}
\frac{1}{k!}(\lambda Y_n)^k e^{-\lambda Y_n}, && \mbox{for } k=2,3,\ldots,\\
&& \\
(\lambda Y_n +1) e^{-\lambda Y_n} && \mbox{for }k=1.
\end{array} \right.
\end{equation}
The process $\left\{ Y_n \right\}_{n \in \bN}$ constitutes a 
Discrete Time Markov Chain with state space $\bR^+$. Indeed, for any $y>0$, 
using the tower property of conditional expectations, (\ref{YK}), and (\ref{KY}) 
we have
\begin{eqnarray} \nonumber
\bP\Big(Y_{n+1}\leq y \mid \cF^Y_n\Big) &=& 
\bE \Big[ \bP( Y_{n+1} \leq y \mid  \cF_n) \mid \cF_n^Y\Big]  \;=\;
\bE \Big[ \bP( Y_{n+1} \leq y \mid  K_{n+1}) \mid \cF_n^Y\Big]  \\
&=& \bP\big(Y_{n+1} \leq y \mid Y_n\big).
\end{eqnarray}
This establishes the markovian property for the process $\{Y_n\}$.
(From (\ref{KY}) we also see that $K_{n+1}$ is conditionally 
independent of $Y_{n-1},Y_{n-2},\ldots, Y_1$ given $Y_n$.) 
Its transition kernel, denoted by $Q(x,dy) := \bP(Y_{n+1}\in dy\mid Y_n = x)$, 
can be obtained using the relationship between $K_{n+1}$ and $Y_{n+1}$:
\begin{eqnarray} \nonumber 
\bP(Y_{n+1} \in dy \mid Y_n = x) &=& \sum_{k=1}^\infty 
\bP(Y_{n+1} \in dy \mid  K_{n+1}=k, Y_n =x) \, \bP(K_{n+1}=k \mid Y_n=x) \\  \nonumber
&=&  \sum_{k=1}^\infty 
\bP(Y_{n+1} \in dy \mid  K_{n+1}=k) \, \bP(K_{n+1}=k \mid Y_n=x) \\  \nonumber
&=& \sum_{k=1}^\infty 
kG(y)^{k-1} g(y)dy  \, \bP(K_{n+1}=k \mid Y_n=x)
\end{eqnarray}
where we have used (\ref{YK}). Together with (\ref{KY}) we obtain 
\begin{eqnarray*}
Q(x,dy) &=& \sum_{k=1}^\infty\frac{(\lambda x)^k}{k!} e^{-\lambda x}
 k G^{k-1}(y)g(y)dy \;+\; e^{-\lambda x}g(y)dy \\
&=& e^{-\lambda x \big(1-G(y) \big)} 
\lambda xe^{-\lambda x}g(y)dy+e^{-\lambda x}g(y)dy
\end{eqnarray*}
and therefore the density of the transition kernel $Q(x,dy)$ with respect to the Lebsgue 
measure on $\bR$, $q(x,y)$, is given by (\ref{transition-density}).
\end{proof}
We next consider the process $\{K_n\}$ of customers served in each stage.
\begin{proposition} \label{prop:Kn}
The sequence of number of customers served in each stage  $\{K_n\}$ is a 
discrete time Markov chain with state space $\bZ^+=\{1,2,\ldots\}$ and 
transition probability matrix given by 
\[
\bP(K_{n+1}=m\mid K_n=k)=\left\{
\begin{array}{lll}
 \int_0^\infty \frac{1}{m!}(\lambda x)^m e^{-\lambda x}kG^{k-1}(x)dG(x), 
&& \mbox{for } m=2,3,\ldots,\\  && \\
\int_0^\infty (\lambda x +1) e^{-\lambda x}kG^{k-1}(x)dG(x)  && \mbox{for }m=1.
\end{array} \right.
\]
\end{proposition}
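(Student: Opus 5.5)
The plan is to mirror the proof of Proposition \ref{prop:Q}, this time conditioning in the opposite order: first on the number of customers $K_n$, then on the stage length $Y_n$. We work with the filtration $\cF'_n := \sigma\{K_n, Y_{n-1}, K_{n-1}, \ldots, Y_1, K_1\}$, which contains $\cF^K_n := \sigma\{K_n,\ldots,K_1\}$. The key structural fact is that, given $\cF'_n$, the service times $\sigma_{L_{n-1}+1},\ldots,\sigma_{L_n}$ of the customers in stage $n$ are i.i.d.\ with distribution $G$. The Poisson points on $(T_{n-1},\infty)$ likewise form a Poisson process of rate $\lambda$ independent of $\cF'_n$. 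Both facts hold because these variables involve only fresh service times and the Poisson process after the stopping time $T_{n-1}$, which is determined by the past. We invoke the strong Markov property of the Poisson process at $T_{n-1}$, together with the independence of $\{\sigma_m\}$ from $\{S_m\}$ and the fact that $L_{n-1}$ is $\cF'_n$-measurable.

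\textbf{Step 1 (length given the count).} From this fact, as in (\ref{YK}), we get $\bP(M_n\le y\mid \cF'_n)=G(y)^{K_n}$, so $M_n$ has conditional density $K_nG(y)^{K_n-1}g(y)$.

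\textbf{Step 2 (count given the active phase).} Also from this fact, conditionally on $\cF'_n$ and $M_n=x$, the variable $N(T_{n-1},T_{n-1}+x]$ is Poisson$(\lambda x)$. We then read off $K_{n+1}$ from the recursion. The event $K_{n+1}=m\ge 2$ occurs exactly when $N=m$. The event $K_{n+1}=1$ occurs when $N=1$ or $N=0$, which gives $(\lambda x+1)e^{-\lambda x}$, consistent with (\ref{KY}).

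\textbf{Step 3 (combine and project).} By the tower property,
\[
\bP(K_{n+1}=m\mid\cF'_n)=\int_0^\infty \bP(K_{n+1}=m\mid M_n=x,\cF'_n)\,K_nG(x)^{K_n-1}\,dG(x),
\]
which is the displayed formula evaluated at $k=K_n$. Because this depends only on $K_n$, conditioning further on $\cF^K_n\subset\cF'_n$ gives the Markov property of $\{K_n\}$ with the stated transition matrix. As a consistency check, these are the $t=\infty$ values of the kernel (\ref{MRP}). For $m=1$, the second term there integrates to $\int_0^\infty G(u)^k\lambda e^{-\lambda u}du$, and integration by parts turns it into $\int_0^\infty e^{-\lambda u}kG^{k-1}dG$. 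This matches the ``$+1$'' term.

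The main obstacle is making the conditional independence in the key fact rigorous, since $T_{n-1}$ is random and the indices of the service times involved are random. I would handle it by decomposing on the events $\{L_{n-1}=\ell, K_n=k\}$. On each such event the relevant service times are the fixed indices $\ell+1,\ldots,\ell+k$, which are independent of everything in $\cF'_n$. I would then apply the strong Markov property of the Poisson process at $T_{n-1}$, a stopping time for the filtration jointly generated by arrivals and the service times already revealed.
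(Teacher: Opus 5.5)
Your proof is correct and has the same skeleton as the paper's. Both mix the Poisson count, with $0$ sent to $1$, over the law of the stage's length given $K_n=k$, which has density $kG^{k-1}g$.

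The substantive difference is the conditioning variable. The paper conditions on the full stage length $Y_n$, using (\ref{YK}), (\ref{KY}) and the claim that $K_{n+1}$ is conditionally independent of $K_1,\ldots,K_n$ given $Y_n$. You condition on the active-phase length $M_n$ together with the richer past $\cF'_n$. Yours is the choice that is actually valid. In general $Y_n=\max(M_n,E_n)$, where $E_n$ is the time from $T_{n-1}$ to the next arrival, and this differs from $M_n$ exactly when the stage has a waiting phase. Consequences:
\begin{itemize}
\item $\bP(Y_n\le y\mid K_n=k)=G(y)^k(1-e^{-\lambda y})$, not $G(y)^k$.
\item For $m\ge 2$, $\bP(K_{n+1}=m\mid Y_n=y,K_n=k)=\frac{(\lambda y)^m e^{-\lambda y}/m!}{1-e^{-\lambda y}+\lambda e^{-\lambda y}G(y)/(k\,g(y))}$, which still depends on $k$.
\end{itemize}
So (\ref{YK}), (\ref{KY}) and the conditional independence the paper invokes are exact only with $M_n$ in place of $Y_n$. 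The stated transition matrix is nevertheless correct, as your check against (\ref{MRP}) confirms.

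Your decomposition on $\{L_{n-1}=\ell,K_n=k\}$ and the strong Markov property at $T_{n-1}$ also supply the rigour the paper leaves implicit. One point should be stated explicitly: the shifted Poisson process must be independent of $\cF'_n$ \emph{jointly} with the fresh service times. Enlarging the initial $\sigma$-field of the arrival filtration by all the $\sigma_m$ gives this. $N$ remains Poisson for the enlarged filtration because the $\sigma_m$ are independent of it, and each $T_{n-1}$ is a stopping time for it.

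Your integration-by-parts check against (\ref{MRP}) at $t=\infty$ is also right; it uses $G(0)=0$, which holds because $G$ has a density.
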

\begin{proof}
Noting that $K_{n+1}$ depends soley on the number of Poisson arrivals during the $n$th
service stage, it follows that it is conditionally independent of $K_1, \ldots, K_n$, given
$Y_n$. is conditionally independent of $K_n$ given $Y_n$ we have 
\begin{eqnarray}  \nonumber
\bP(K_{n+1}=m\mid K_n=k,K_{n-1}=k_{n-1},\ldots,K_1=k_1) && \\ \nonumber
&& \hspace{-3in} =\; 
\int_0^\infty \bP(K_{n+1}=m, Y_n\in dy \mid K_n=k,K_{n-1}=k_{n-1},\ldots,K_1=k_1) \\
&& \hspace{-3in} =\;  \int_0^\infty \bP(K_{n+1}=m \mid Y_n \in dy)\,  \label{Kntrans} 
\bP(Y_n\in dy \mid K_n=k) \\  \nonumber
&& \hspace{-3in} =\;  \int_0^\infty 
\bE(Y_n\in dy \ind(K_{n+1}=m) \mid K_n=k)  \;=\; \bP(K_{n+1}=m\mid K_n=k) .
\end{eqnarray}
This shows establishes the markovian property for $\{K_n\}$. The transition
probability matrix for the chain is obtained immediately by using  (\ref{YK}) 
and (\ref{KY}) in (\ref{Kntrans}).
\end{proof}

In this section we will analyze the stationary behavior of the system by obtaining
the stationary distribution of the continuous state space Markov chain $\{Y_n\}$.
For concepts, terminology, and results pertaining to Markov chains on general 
state spaces we refer the reader to Tweedie \cite{Tweedie}, Meyn and Tweedie 
\cite{MT}, and Douc et al.\ \cite{Douc}. 
An alternative approach would entail analyzing first the discrete state space
Markov chain $\{K_n\}$ and its stationary distribution and thence obtain corresponding
results for the chain $\{Y_n\}$ of stage lengths. While we will not pursue this approach
here, we will use it in a related problem for a gated $GI/M/\infty$ system in 
Section \ref{sc:SGMinf}.

Consider the measure $\phi$ on $({\sf X}, \cB)$ defined by 
$\phi(A) = \int_A g(y) dy$ which is absolutely continuous with respect to the Lebesgue 
measure on ${\sf X}=[0,\infty)$. This is an {\em irreducibility measure} for 
the chain $\boldsymbol{Y}:=\{Y_n\}$ because $\phi(A) > 0$ implies that 
$Q(x,A) >0$ for all $x\in {\sf X}$. In the Appendix we show that 
$\boldsymbol{Y}$ is $\psi$-irreducible. ($\psi$-irreducibility is a property 
that generalizes the notion of irreducibility to general state Markov chains. 
The behavior of $\psi$-irreducible chains mimics that of irreducible chains 
with a countable state space in many respects. See \cite{Tweedie}, \cite{MT}.)



\begin{proposition}\label{prop:pi-moments} 
In the framework of this section, assume that the service time
distribution has finite moments of all orders:
$\bE \sigma_1^n = \int_0^\infty y^ng(y)dy < \infty$ for all $n\in \bN$.
Then the Markov chain $\boldsymbol{Y}:=\{Y_n\}$ of the stage length process 
has a unique stationary probability measure $\pi$ with
density with respect to the Lebesgue measure on $\bR^+$, $f$, 
which satisfies the integral equation 
\begin{equation} \label{invariant-density}
f(y)=\int_0^\infty f(x) q(x,y)dx \hspace{0.2in} \mbox{ for all $y \in \bR^+$.}
\end{equation}
The stationary measure $\pi$ has finite moments of all orders: 
$\int_0^\infty y^n \pi(dy) <\infty$ for all $n \in \bN$. 
\end{proposition}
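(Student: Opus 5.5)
The plan is a Foster--Lyapunov drift argument for the chain $\boldsymbol{Y}$, followed by standard general state space theory. First I record the structural facts that come straight from Proposition \ref{prop:Q}. The kernel is bounded below by a minorization: $q(x,y)\ge e^{-\lambda x}g(y)$. Hence for any $C=[0,c]$, every $x\in C$ satisfies $Q(x,A)\ge e^{-\lambda c}\phi(A)$, so every bounded interval is a small set, and the chain is strongly aperiodic. Together with the $\phi$-irreducibility noted above, this places us in the setting of \cite{MT}: a $\psi$-irreducible aperiodic chain satisfying the drift condition (V2) with a small set $C$ is positive Harris recurrent and has a unique invariant probability measure $\pi$.

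For the drift I use the test functions $V_n(x)=1+x^n$. By (\ref{YK}), given $K_{n+1}=k$ the next stage length is the maximum of $k$ i.i.d.\ service times, so $\bE[Y_{n+1}^n\mid K_{n+1}=k]=\bE\max(\sigma_1,\ldots,\sigma_k)^n$. The key estimate is that this grows sublinearly (in fact slower than any power) in $k$ when all moments are finite. The crude bound $\bE M_k^n\le \bE(\sum_i\sigma_i^p)^{n/p}\le (k\,\bE\sigma^p)^{n/p}$, obtained by Jensen for $p>n$, gives $\bE M_k^n\le C_p k^{n/p}$. Conditioning on $Y_n=x$ and using (\ref{KY}) with $K_{n+1}\le 1+N$, where $N\sim$ Poisson$(\lambda x)$, Jensen again gives
\[
\bE[Y_{n+1}^n\mid Y_n=x]\le C_p(1+\lambda x)^{n/p}.
\]
Choosing $p>n^2$ makes the exponent $n/p<1/n$, so the right side is $o(x^n)$. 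Therefore $QV_n(x)\le \tfrac12 V_n(x)$ for $x$ large, and $QV_n$ is bounded on compacts. This is the geometric drift condition $QV_n\le \beta V_n+b\ind_C$ with $C=[0,c]$ small. The Comparison theorem of \cite{MT} (14.3.7) then gives $\pi(V_n)\le b\,\pi(C)/(1-\beta)<\infty$, which yields finiteness of all moments of $\pi$. The case $n=1$ alone already gives positive recurrence and existence and uniqueness of $\pi$.

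Absolute continuity and the integral equation follow from invariance. Since $\pi=\pi Q$ and $Q(x,\cdot)$ has density $q(x,\cdot)$, Tonelli gives $\pi(A)=\int_A\int_0^\infty q(x,y)\pi(dx)\,dy$. So $\pi$ has density $\int q(x,y)\pi(dx)$. Calling this $f$ and substituting $\pi(dx)=f(x)dx$ yields (\ref{invariant-density}) for every $y$, not merely almost every $y$, provided $f$ is defined by the right-hand side. This is finite because $q(x,y)\le(\lambda x+1)g(y)$ and $\pi$ has a finite first moment.

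I expect the main obstacle to be the moment-of-maximum estimate and its uniformity. I must make sure the bound $\bE M_k^n\le C k^{n/p}$ is combined correctly with the Poisson mixing, including the extra customer admitted after a waiting phase. Jensen for the concave map $k\mapsto k^{n/p}$ handles the mixing cleanly. If the Comparison theorem route feels too heavy, a fallback is to iterate the inequality $\bE Y_{m+1}^n\le C(1+\lambda\bE Y_m)^{n/p}$ along the stationary chain, using lower moments already known to be finite. This gives an inductive proof over $n$.
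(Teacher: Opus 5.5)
Your proof is correct, but it obtains the key estimate differently from the paper. The paper applies Tweedie's $h$-modulated drift criterion (Theorem \ref{prop:Tweedie3.10}) directly, with $V(x)=Kx^n$, $h(x)=x^n$ and small set $[0,a]$. It gets the sublinear growth of the conditional $n$th moment of the next stage length qualitatively, from the explicit kernel. By dominated convergence with dominating function $\lambda y^n g(y)$, one has $\int_0^\infty \lambda e^{-\lambda x\overline{G}(y)}y^n g(y)\,dy\to 0$ as $x\to\infty$. Hence $QV(x)\le mx+K\bE\sigma_1^n$ for $x\ge a$, with $m$ as small as desired. You use only the structure of the kernel: a maximum of $K$ i.i.d.\ service times with $K\le 1+\mathrm{Poisson}(\lambda x)$. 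From this you derive the quantitative bound $C_p(1+\lambda x)^{n/p}$ via $\ell^p$ domination of the maximum and two uses of Jensen, and then conclude by geometric drift plus the comparison theorem.

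The paper's route is sharper in hypotheses: the $n$th stationary moment is finite as soon as $\bE\sigma_1^n<\infty$, whereas yours needs $\bE\sigma_1^p$ for some $p>n$. Under the all-moments assumption this does not matter. Note also that $p>n$ already suffices: $n/p<1$ gives $o(x)$, hence $o(x^n)$, so $p>n^2$ is unnecessary.

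Your route gives an explicit rate, $V_n$-geometric ergodicity as a by-product, and the stationary inequality $\bE_\pi Y^n\le C_p(1+\lambda\bE_\pi Y)^{n/p}$, which shows that all moments are controlled by the first. The paper's $o(x)$ estimate would also yield geometric drift (with $1+x^n$ in place of $Kx^n$), so the choice between Tweedie's criterion and (V4) plus the comparison theorem is not essential.

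One caution on your fallback: that stationary inequality cannot by itself give $\bE_\pi Y<\infty$, since it is consistent with $\bE_\pi Y=\infty$. The first moment must still come from the drift, or from iterating the bound from a fixed initial state and passing to the limit using convergence to $\pi$. Once the first moment is known, no induction over lower moments is needed.
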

The proof of this proposition is given is Section \ref{sec:MCGeneralState} of 
the Appendix.

Equation (\ref{invariant-density}) is not easy to solve. We will obtain here 
a light-traffic solution in the form of series in $\lambda$ as follows. We begin 
by expressing the kernel density $q(x,y)$ as a power series in $\lambda$: 
From (\ref{transition-density}),
\begin{eqnarray*}
q(x,y) &=& \lambda x g(y) \sum_{k=0}^\infty (-1)^k \frac{(\lambda x)^k}{k!} 
\overline{G}(y)^k + g(y)\sum_{k=0}^\infty (-1)^k \frac{(\lambda x)^k}{k!}   
\end{eqnarray*}
which we rewrite as 
\begin{eqnarray}   \nonumber
q(x,y) &=& g(y)\, \left[ 1+ (\lambda x)^2 \left(\frac{1}{2}- \overline{G}(y) \right) 
- \frac{(\lambda x)^3}{2!} \left( \frac{1}{3} - \overline{G}(y)^2 \right) \right. \\
&& \hspace{2in} \left.+ \frac{(\lambda x)^4}{3!} \left( \frac{1}{4} 
- \overline{G}(y)^3 \right) - \cdots \right].   \label{fygx}
\end{eqnarray}

\begin{proposition} Suppose that $\sigma_i$ are i.i.d.\ random variables with density 
$g$ and finite moments of all orders. Let $f$ denote the invariant density of the 
stationary stage length, and suppose that $\beta_k:=\int_0^\infty x^k f(x) dx$, 
$k=1,2,\ldots$, denote its moments (which exist as shown in Proposition \ref{prop:pi-moments}). 
Define the quantities
 \begin{equation} \label{gammadef}
\gamma_{m,k} \;:=\; \bE[\min(\sigma_1,\sigma_2,\ldots,\sigma_k)^m].
\end{equation}
Then the moments $\{\beta_i\}$, $i=2,3,\ldots,$ satisfy the infinite linear system
\begin{eqnarray}  \nonumber
\beta_i &=& \gamma_{i,1} + \frac{\lambda^2 \beta_2}{2!} \, \left( \gamma_{i,1}-\gamma_{i,2} \right) 
- \frac{\lambda^3 \beta_3}{3!} \, \left( \gamma_{i,1}-\gamma_{i,3} \right) +\cdots
+(-1)^j \frac{\lambda^j \beta_j}{j!} \, \left( \gamma_{i,1}-\gamma_{i,j} \right) +\cdots \\ \label{beta_system}
&& \hspace{4.1in} i=2,3,4,\ldots \;\;
\end{eqnarray}
The invariant density $f$ of the stage length can be expressed in terms of the moments 
$\{\beta_i\}$, the density $g$, and complementary distribution function $\overline{G}$ 
of the service time by the series
\begin{eqnarray}  \nonumber
f(y) &=& g(y)\,
 \left[ 1+ \frac{\lambda^2 \beta_2}{2!} \left(1- 2\overline{G}(y) \right) - \frac{\lambda^3 \beta_3}{3!} 
 \left( 1 - 3\overline{G}(y)^2 \right) \right. \\  \label{f-moments}
&& \hspace{2.3in} \left. + \frac{\lambda^4 \beta_4}{4!} \left( 1 - 4\overline{G}(y)^3 \right) - \cdots \right].
\end{eqnarray}
\end{proposition}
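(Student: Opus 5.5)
Substituting the power series expansion (\ref{fygx}) of the kernel into the invariant equation (\ref{invariant-density}) and integrating term by term in $x$ should give the whole statement. Write
\[
q(x,y) = g(y)\sum_{j=0}^\infty (-1)^j \frac{(\lambda x)^j}{j!}\, c_j(y),
\]
with $c_0 = 1$, $c_1 = 0$ and $c_j(y) = 1 - j\,\overline{G}(y)^{j-1}$ for $j \geq 2$. The bracket in (\ref{fygx}) has exactly this form once the factor $\frac{1}{j}$ is absorbed into $j!$. Indeed, the coefficient of $(\lambda x)^j$ for $j\ge 1$ is $(-1)^j\big(\frac{1}{j!} - \frac{\overline{G}^{j-1}}{(j-1)!}\big)$, and for $j=1$ the two terms cancel.

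Multiplying by $f(x)$ and integrating over $x$, the integral $\int_0^\infty f(x)(\lambda x)^j\,dx$ equals $\lambda^j\beta_j$. We use $\beta_0 = 1$, since $f$ is a probability density. This yields (\ref{f-moments}), provided the interchange of sum and integral is legitimate.

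To justify the interchange, I would bound the absolute series. Since $|c_j(y)| \le 1 + j$, it suffices to show
\[
\sum_j \frac{\lambda^j (1+j)}{j!}\int x^j f(x)\,dx \;=\; \int f(x)\,(1+\lambda x)\,e^{\lambda x}\,dx \;<\; \infty,
\]
which is finiteness of an exponential moment of $\pi$. This is the main obstacle. Proposition \ref{prop:pi-moments} only provides finiteness of all moments, not exponential ones.

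I would handle this by not expanding blindly. Instead, I would use the exact representation $q(x,y)=g(y)\big(\lambda x e^{-\lambda x\overline{G}(y)} + e^{-\lambda x}\big)$, which is bounded by $g(y)(1+\lambda x)$, so $f(y) = g(y)\,h(y)$ with $h$ finite. One route is to first establish $\int e^{\lambda x} f(x)\,dx<\infty$, for instance under a light-traffic or exponential-moment assumption on $\sigma$. This uses the fact that $f \le g\cdot(1+\lambda\beta_1)$ inherits the tails of $g$, so an exponential moment of $g$ gives one for $f$. An alternative that avoids exponential moments is to expand $e^{-\lambda x \overline{G}(y)}$ and $e^{-\lambda x}$ separately. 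Each is an alternating series whose partial sums bracket the function. Integrating these partial sums against $f$ gives upper and lower bounds, and when the moments grow slowly enough (the light traffic case) the bounds converge. I would state that the identity holds whenever $\sum_j \lambda^j\beta_j j/j!<\infty$, and note that this is the regime in which the series is meant.

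With (\ref{f-moments}) in hand, multiply by $y^i$ and integrate in $y$ to obtain the system (\ref{beta_system}). The only computation needed is
\[
\int_0^\infty y^i\, j\,\overline{G}(y)^{j-1} g(y)\,dy = \gamma_{i,j}.
\]
This holds because $j\,\overline{G}^{j-1} g$ is the density of $\min(\sigma_1,\dots,\sigma_j)$. Hence $\int y^i g(y) c_j(y)\,dy = \gamma_{i,1}-\gamma_{i,j}$. The constant term gives $\gamma_{i,1} = \bE\sigma_1^i$, and the $j=1$ term vanishes. Again the interchange is justified by the same absolute convergence, now using $\gamma_{i,j}\le\gamma_{i,1}<\infty$. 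This gives (\ref{beta_system}) for every $i\ge 2$, and in fact also for $i=1$.
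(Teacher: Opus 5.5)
Your proposal follows the paper's proof: expand $q(x,y)$ in powers of $\lambda x$ as in (\ref{fygx}), integrate term by term against $f$ to get (\ref{f-moments}), and then integrate against $y^i$, using that $j\,g\,\overline{G}^{\,j-1}$ is the density of $\min(\sigma_1,\ldots,\sigma_j)$, to get (\ref{beta_system}). The paper interchanges sum and integral without comment, and your objection is well founded. Your bound $f\le(1+\lambda\beta_1)g$ shows that $\bE[\sigma_1 e^{\lambda\sigma_1}]<\infty$ suffices, while the companion bound $f(y)\ge g(y)\int_0^\infty e^{-\lambda x}f(x)\,dx$ (from $q(x,y)\ge e^{-\lambda x}g(y)$) shows that some exponential-moment hypothesis is genuinely needed: for lognormal service times, for example, the terms of (\ref{f-moments}) are unbounded, so restricting to that regime is the correct repair and not a defect of your argument.
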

\begin{proof}
We begin with the relationship
\begin{equation}  \label{moments}
\int_0^\infty y^m g(y) \big( 1- k\overline{G}(y)^{k-1}\big) dy 
\;=\; \gamma_{m,1} - \gamma_{m,k} \;\;\;\;\;
\end{equation}
which holds because $kg(y)\overline{G}(y)^{k-1}$ is the density of 
$\min(\sigma_1,\sigma_2,\ldots,\sigma_k)$. From
\[
\beta_i \;=\; \int_0^\infty y^i f(y)dy \;=\;  \int_0^\infty  \left( \int_0^\infty y^i q(k,y)dy \right) f(x) dx
\]
taking into account (\ref{invariant-density}) and (\ref{fygx}) and reversing  the order 
of integration we have
\begin{eqnarray*}
\beta_i &=& \int_0^\infty y^i \, \left( \int_0^\infty \left[ 1+ \sum_{k=2}^\infty  (-1)^k\frac{(\lambda x)^k}{k!}
\left( 1- k \overline{G}(y)^{k-1}\right) \right] f(x)dx \right)  g(y)\,dy \\
&=& \int_0^\infty y^i \, \left[ 1+ \sum_{k=2}^\infty (-1)^k\frac{\lambda^k \beta_k}{k!}
\left( 1- k \overline{G}(y)^{k-1}\right) \right]g(y)\,dy \\
&=& \int_0^\infty y^i g(y)\,dy \;+\; \sum_{k=2}^\infty  (-1)^k
\frac{\lambda^k \beta_k}{k!}\int_0^\infty y^i g(y)  
\left( 1- k \overline{G}(y)^{k-1}\right)\,dy .
\end{eqnarray*}
Taking into account (\ref{moments}), we obtain (\ref{beta_system}). \end{proof}

\begin{figure}[h!]
  \begin{center}
    \includegraphics[width=6in]{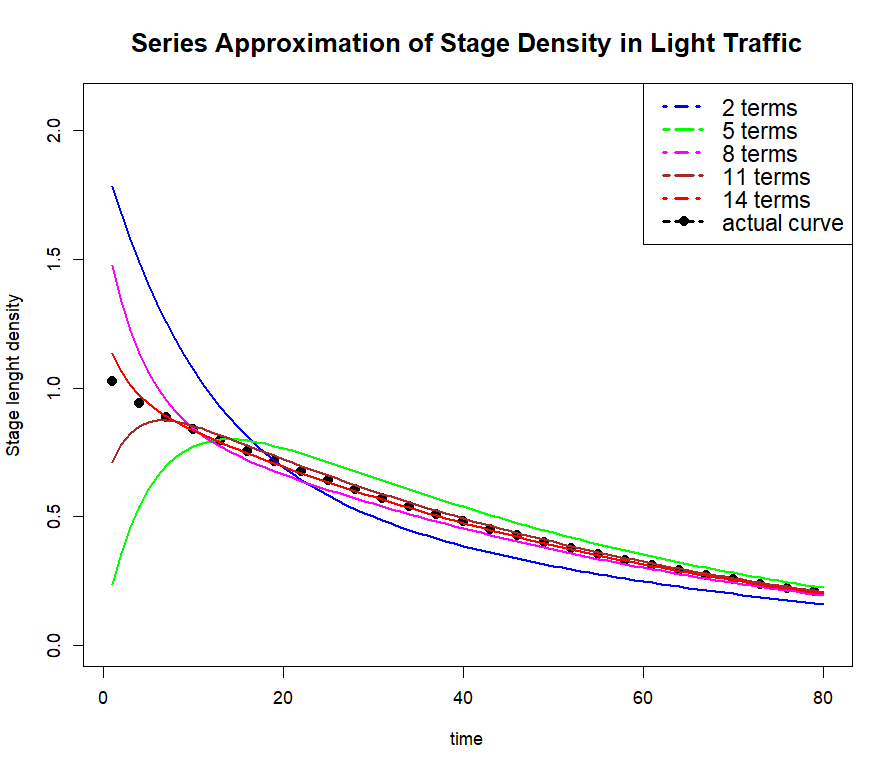}
  \end{center}
  \caption{A series representation for the stationary density of the length of a stage 
  in a gated $M/M/\infty$ queue in light traffic. Here $\lambda=1$ and $\mu =2.5$. A
  truncated ($10 \times 10$) version of the system (\ref{is}) is used to obtain
  approximate values for $\beta_2,\ldots,\beta_{11}$. These in turn are used in (\ref{f-moments}) 
  to give a numerical illustration of the rapidity of convergence.  A comparison with the actual density
  of the stage length in shown in Figure \ref{fig3}.}
  \label{fig2}
\end{figure}

Written in extensive form, (\ref{beta_system}) is an infinite system of linear 
equations for all the moments of the stage length of order 2 and above:
\begin{eqnarray}  \nonumber
\beta_2 &=& \gamma_{2,1} + \frac{\lambda^2 \beta_2}{2!} \, \left( \gamma_{2,1}-\gamma_{2,2} \right) 
- \frac{\lambda^3 \beta_3}{3!} \, \left( \gamma_{2,1}-\gamma_{2,3} \right)
+ \frac{\lambda^4 \beta_4}{4!} \, \left( \gamma_{2,1}-\gamma_{2,4} \right) \;- \cdots \\  \nonumber
\beta_3 &=& \gamma_{3,1} + \frac{\lambda^2 \beta_2}{2!} \, \left( \gamma_{3,1}-\gamma_{3,2} \right) 
- \frac{\lambda^3 \beta_3}{3!} \, \left( \gamma_{3,1}-\gamma_{3,3} \right)
+ \frac{\lambda^4 \beta_4}{4!} \, \left( \gamma_{3,1}-\gamma_{3,4} \right) \;- \cdots \\  \nonumber
\beta_4 &=& \gamma_{4,1} + \frac{\lambda^2 \beta_2}{2!} \, \left( \gamma_{4,1}-\gamma_{4,2} \right) 
- \frac{\lambda^3 \beta_3}{3!} \, \left( \gamma_{4,1}-\gamma_{4,3} \right)
+ \frac{\lambda^4 \beta_4}{4!} \, \left( \gamma_{4,1}-\gamma_{4,4} \right) \;- \cdots \\   \label{is}
& \vdots &
\end{eqnarray}
Assuming that the moment sequence can be determined from the above system, the 
invariant density can be obtained from the series (\ref{f-moments}). The discussion
regarding the existence and uniqueness of the solution of the above infinite system, 
as well as its approximation by considering a truncated version of (\ref{is}) and the 
convergence of the infinite system (\ref{f-moments}) will be discussed in the sequel.
\begin{remark}
The above system (\ref{is}) does not determine the first moment of the stationary stage length,
$\beta_1$. Note however that the expression for stationary density $f$ in (\ref{f-moments})
is expressed in terms of $\beta_2,\beta_3,\ldots$ and does not involve $\beta_1$ which can therefore
be obtained as 
\begin{eqnarray}  \nonumber
\beta_1 &=& \int_0^\infty f(y) y dy \;=\; \int_0^\infty g(y) y dy \;+\; \sum_{k=2}^\infty  (-1)^k
\frac{\lambda^k \beta_k}{k!} \int_0^\infty \big( g_1(y) -g_k(y) \big) y dy \\   \label{beta1}
&=& \gamma_{1,1} \;+\; \sum_{k=2}^\infty  (-1)^k\frac{\lambda^k \beta_k}{k!}
\big( \gamma_{1,1} -\gamma_{1,k}\big).
\end{eqnarray}
\end{remark}
Having determined the stationary density of the stage length we may use it to obtain
the distribution of the number of customers served in a stage in stationarity using 
(\ref{KY}).  For instance
\begin{eqnarray*}
\bE[K_n | Y_n] \;=\; \sum_{k=2}^\infty k\frac{(\lambda Y_n)^k}{k!} e^{-\lambda Y_n}\;+\;
(1+\lambda Y_n) e^{-\lambda Y_n} \;=\; \lambda Y_n + e^{-\lambda Y_n}.
\end{eqnarray*}
In stationarity the density of $Y_n$ is given by (\ref{f-moments}) and therefore
\begin{equation}
\bE[K_n] \;=\; 1 + \sum_{k=2}^\infty (-1)^k \frac{\lambda^k}{k!} \beta_k.
\end{equation} 

\subsection{Exponential service times}
We continue the above discussion in the special case where the service time 
distribution is exponential, i.e.\ $g(x) := \mu e^{-\mu x}$, $x \geq 0$. Then, since 
the minimum of $j$ independent exponential random variables 
$\min(\sigma_1,\ldots,\sigma_j)$ with $\sigma_l \sim \mbox{exp}(\mu)$, is also 
exponential with rate $j\mu$, we have
\[
 \gamma_{i,1} = \frac{i!}{\mu^i}, \hspace{0.2in}  \gamma_{i,j} = \frac{i!}{j^i \mu^i}\;.
\]
Taking this into account the infinite system of linear equations (\ref{beta_system}) is written as
\begin{eqnarray}  \nonumber
\beta_i &=& \frac{i!}{\mu^i} + \frac{\lambda^2 \beta_2}{2!} \, 
\left( \frac{i!}{\mu^i}-\frac{i!}{2^i \mu^i} \right) - \frac{\lambda^3 \beta_3}{3!} \, 
\left( \frac{i!}{\mu^i}-\frac{i!}{3^i \mu^i} \right) 
+\cdots+(-1)^j \frac{\lambda^j \beta_j}{j!} \, \left( \frac{i!}{\mu^i}-\frac{i!}{j^i \mu^i} \right) +\cdots \\  
\label{m-infinite-system}
&& \hspace{4in} i=2,3,\ldots \;\;
\end{eqnarray}
Let us next define the quantities
\begin{equation} \label{y_def}
y_i := \frac{\lambda^i \beta_i}{i!}, \hspace{0.2in} i=2,3,\ldots.
\end{equation}
Multiplying both sides of (\ref{m-infinite-system}) by $\frac{\lambda^i}{i!}$, setting 
$\rho:=\lambda/\mu$, and using the definition (\ref{y_def}) we obtain the system
\begin{equation} \label{y_system}
\rho^{-i} y_i \;=\; 1 + y_2 \left( 1- 2^{-i}\right) - y_3 \left( 1- 3^{-i}\right) + \cdots 
+(-1)^j  y_j \left( 1- j^{-i}\right) + \cdots,   \hspace{0.1in} i=2,3,\ldots .
\end{equation}
We will show that the solution of the finite linear system approximates the solution 
of the infinite system \cite{Shivakumar-Wong}.


Having obtained the sequence $\{y_k\}$, the invariant stage length density can be obtained by
\begin{equation} \label{invariant-density-m}
f(x) \;=\; \mu e^{-\mu x} + \sum_{k=2}^\infty  (-1)^k y_k  
\left( \mu e^{-\mu x} - k \mu e^{-k\mu x}\right) , \;\;\; x>0.
\end{equation}
From this we may also obtain the first moment of the stationary stage length distribution
\begin{equation} \label{Exp-MeanStageLength}
\beta_1 \;=\; \int_0^\infty x f(x)dx \;=\; \frac{1}{\mu} + 
\frac{1}{\mu} \sum_{k=2}^\infty  (-1)^k \frac{\lambda^k \beta_k}{k!} \, \frac{k-1}{k} 
\;=\;\frac{1}{\mu} + \frac{1}{\mu} \sum_{k=2}^\infty  (-1)^k y_k \, \frac{k-1}{k}  .
\end{equation}

\begin{figure}[!htbp]
	\begin{center}
		\includegraphics[width=6.3in]{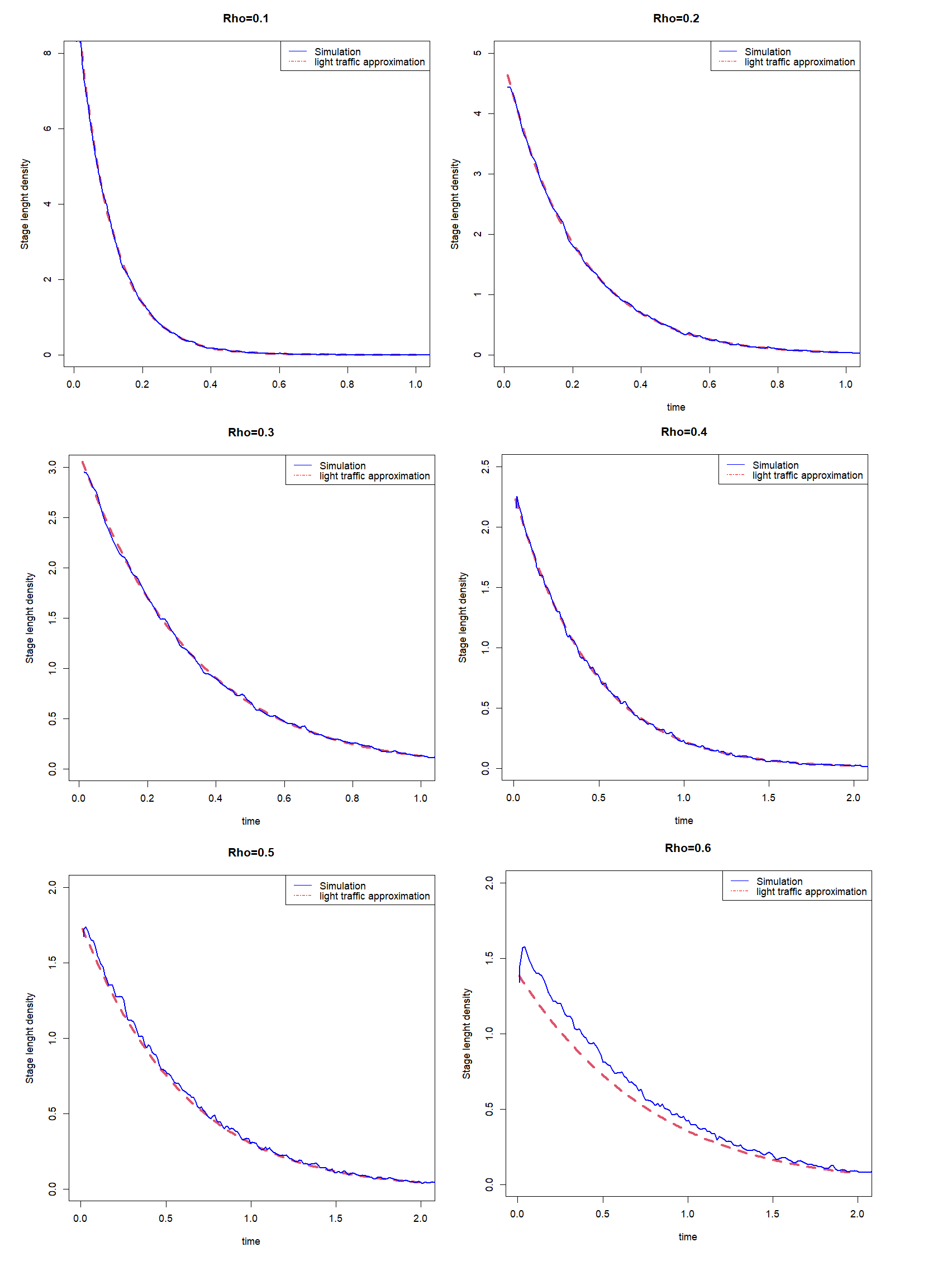}
	\end{center}
	\vspace{-0.3in}
	\caption{A comparison of the actual density for the invariant density of the stage length on an 
	$M/M/\infty$ queue in light traffic, as obtained by simulation with the solution given by 
	(\ref{invariant-density-m}). Notice the deterioration and, eventually, the invalidity of the quality 
	of the light traffic approximation when $\rho$ becomes larger. }
	\label{fig3}
\end{figure}

\subsection{The infinite linear system (\ref{y_system})}
We will begin by recasting the infinite system (\ref{y_system}) in an equivalent 
form that will be amenable to treatment in the framework of the Appendix. 
We begin by defining
\[
s:=y_2-y_3+y_4-y_5+\cdots
\]
which is a finite quantity. Indeed, in view of (\ref{y_def}),
\[
\bE [e^{-\lambda Y}] \;=\; 
\bE \sum_{n=0}^\infty \frac{(-\lambda Y)^n}{n!} 
\;=\; \sum_{n=0}^\infty (-1)^n\frac{\lambda^n}{n!} \beta_n \;=\; 1-\lambda \beta_1 + s.
\]
This series converges absolutely for sufficiently small $\lambda$. 
(Indeed, $\lambda <\mu$ implies that $Ee^{\lambda Y}<\infty$ because we may compare 
the system in question with one where customers within a stage are served not in 
parallel but in series, one after another, in other words by comparing the system 
we study with an ordinary $M/M/1$ queue.)

Write now the system (\ref{y_system}) as
\begin{equation} \label{syst-yi}
y_i \;=\; \rho^i + \rho^i \sum_{j=2}^\infty (-1)^j y_j \left( 1- j^{-i} \right) \;=\;
\rho^i + \rho^i s  - \sum_{j=2}^\infty (-1)^j  y_j \left(\frac{\rho}{j}\right)^i \, , 
\hspace{0.2in} i=2,3,\ldots.
\end{equation}
Multiplying the above by $(-1)^i$ and adding term by term for all $i$ we obtain
\[
s \;=\; \frac{\rho^2}{1+\rho} (1+s)  - 
\sum_{i=2}^\infty (-1)^j \sum_{j=2}^\infty  y_j \left(\frac{\rho}{j}\right)^i \;=\;
\frac{\rho^2}{1+\rho} (1+s)  - \sum_{j=2}^\infty (-1)^j  y_j \frac{ \rho^2}{j(j+\rho)}.
\]
Taking this equation, together with the system (\ref{syst-yi}), we obtain a new system. Set 
$x_1:=s$ and $x_i :=y_i$ for $i=2,3,\ldots$. Then, multiplying (\ref{syst-yi})  by $\rho^{i/2}$ 
we have the new equivalent system
\begin{eqnarray} \nonumber
x_1\,\frac{1+\rho-\rho^2}{1+\rho} + \sum_{j=2}^\infty (-1)^j x_j \frac{ \rho^2}{j(j+\rho)} 
&=&  \frac{\rho^2}{1+\rho}, \\  
\label{new-system-yi}
x_i \,\left(\rho^{-i/2} + (-1)^i\frac{\rho^{i/2}}{i^i}\right)  -  x_1 \rho^{i/2} 
+ \sum_{{j=2} \atop {j\ne i}}^\infty 
(-1)^j x_j \left( \frac{\rho^{1/2}}{j}\right)^i   &=& \rho^{i/2},  \hspace{0.2in} i=2,3,\ldots.
\end{eqnarray}
We will use the results of section \ref{sec:Infinite_Systems} to show that 
system (\ref{new-system-yi}), and thus the equivalent system (\ref{y_system}), has 
a unique solution and that, furthermore, the solutions of the truncated systems 
resulting by keeping the first $N$ equations and the first $N$ unknowns, constitutes 
a Cauchy sequence converging to this solution. This justifies the elementary 
truncation approximation.

We first check the diagonal dominance condition (\ref{strictlyDD}). For $i=1$ this is satisfied
provided that 
\begin{equation} \label{DD1}
\frac{1+\rho-\rho^2}{1+\rho}\,>\, \sum_{j=2}^\infty \frac{ \rho^2}{j(j+\rho)}.
\end{equation}
However $\sum_{j=2}^\infty \frac{ \rho^2}{j(j+\rho)} \leq \rho^2 \sum_{j=2}^\infty \frac{1}{j^2} =
 \rho^2\big(\frac{\pi^2}{6}-1\big) \approx 0.645\rho^2 $.
Therefore, if
\begin{equation} \label{rhomax1}
\frac{1+\rho-\rho^2}{1+\rho}  > \big(\frac{\pi^2}{6}-1\big) \rho^2
\end{equation}
then (\ref{DD1}) holds. (\ref{rhomax1}) holds when $\rho\in(0,0.926)$.

For $i \geq 2$ we must have $ \left(\rho^{-i/2} +(-1)^i\frac{\rho^{i/2}}{i^i}\right)  > 
\rho^{i/2} + \sum_{{j=2} \atop {j\ne i}}^\infty  \left( \frac{\rho^{1/2}}{j}\right)^i$ or equivalently
\begin{equation} \label{DD2}
\rho^{-i} +(-1)^i \frac{1}{i^i} \;>\; 1+ \sum_{{j=2} \atop {j\ne i}}^\infty  \left( \frac{1}{j}\right)^i.
\end{equation}
The series above always converges since $i\geq 2$. Also
\[
1+  \sum_{{j=2} \atop {j\ne i}}^\infty  \left( \frac{1}{j}\right)^i  < 
 \sum_{j=1}^\infty  \frac{1}{j^2} = \frac{\pi^2}{6} \;\; \mbox{ for all $i \geq 2$.}
\]
Hence (\ref{DD2}) is satisfied for all $i\geq 2$ provided that
$\rho^{-2} > \frac{\pi^2}{6} \approx 1.64$ or equivalently when $\rho < 0.779$.
We conclude that when $\rho <\rho_0:= \min(0.926,0.779) = 0.779$ condition (\ref{strictlyDD}) 
is satisfied.

Turning to (\ref{infinite_system_1})
\[
\sum_{i=1}^\infty \frac{1}{|a_{ii}|} \;=\; \frac{1+\rho}{1+\rho-\rho^2} + 
\sum_{i=2}^\infty \left(\rho^{-i/2}+(-1)^i \rho^{i/2} i^{-i}\right)^{-1} <\frac{1+\rho}{1+\rho-\rho^2} + C_1
\sum_{i=2}^\infty \rho^{i/2}  < \infty
\]
(for some appropriate $C_1>0$) since $\rho<1$. 

To prove (\ref{infinite_system_2}) note that $\sum_{j=2}^\infty \frac{ \rho^2}{j(j+\rho)}$ converges  and 
\[
\sum_{j\ne i}|a_{ij}| = \rho^{i/2}+ \sum_{j=2,\, j\ne i}^\infty \frac{\rho^{i/2}}{j^i} 
<\rho^{i/2}\pi^2/6 \;\; \mbox{for all $i\geq 2$.} 
\]
  
Finally, 
\[
\sum_{j=1}^\infty |a_{1j}| =  \frac{1+\rho-\rho^2}{1+\rho}+ \sum_{j=2}^\infty \rho^{i/2} <\infty
\]
and 
\[
\sum_{j=1}^\infty |a_{ij}| = \rho^{i/2} + \sum_{j=2}^\infty \frac{\rho^{i/2}}{j^i}
 + \rho^{-i/2}
\] 
for each $i\geq 2$. The series converges for all $i$ and thus $\sum_{j=1}^\infty |a_{ij}| < \infty$ 
for all $i$. (Note of course that it is not bounded as a sequence in $i$.) Therefore (\ref{infinite_system_3}) 
is satisfied as well. 

Hence Theorem \ref{th:Appendix} shows 
that, if $\rho \in (0,\rho_0)$ the unique solution of the system can be approximated 
by the solutions of the sequence of truncated finite systems. (In practice we have 
noticed experimentally that, as long as $\rho <1$ the approximation procedure converges. 
Theorem \ref{th:Appendix} gives sufficient conditions 
for the approximation procedure to hold.)


\begin{figure}[!h]
  \begin{center}
    \includegraphics[width=6in]{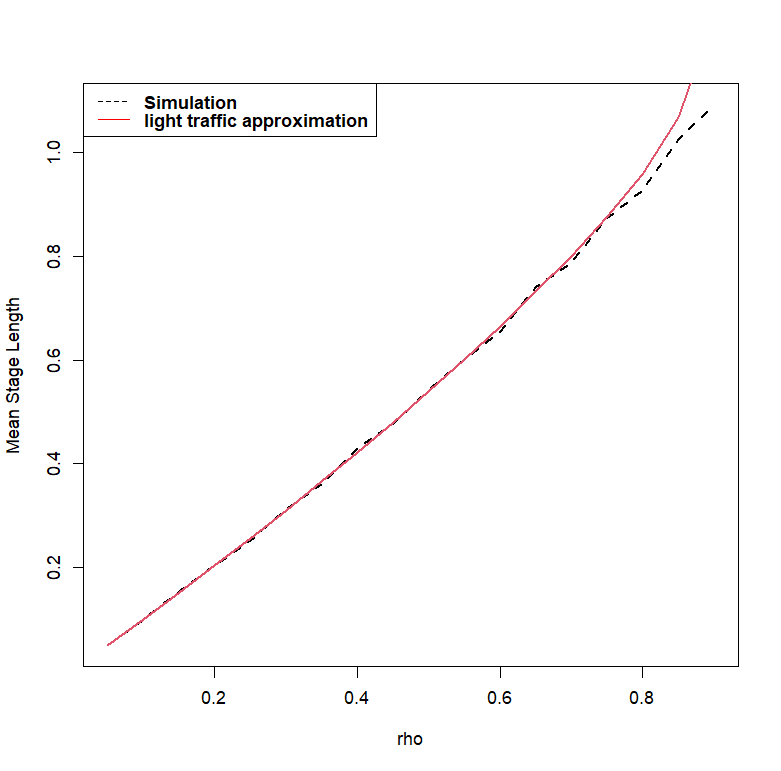}
  \end{center}
  \caption{The mean stage length in the Gated $M/M/\infty$. The dotted line is 
  obtained by simulating $10^4$ stages. The red line, gives the light
  traffic approximation (equation \ref{Exp-MeanStageLength}). As the traffic 
  intensity $\rho$ increases, the quality of the light traffic approach deteriorates.}
  \label{fig4}
\end{figure}


\section{The synchronized gated $GI/M/\infty$ system - number of customers 
served in a stage}  \label{sc:SGMinf}

We now turn to an infinite server system related to that of the previous
section, in which customers arrive according to a renewal process 
$\{S_m\}$ and service requirements $\{\sigma_m\}$ are i.i.d. exponentially 
distributed with rate $\mu$ and independent of the renewal arrival process. 
The gate mechanism operates essentially in the same fashion 
as in the $M/GI/\infty$ system considered in section \ref{sec:GQ} with the 
exception that when the customers in a stage (who are again served in parallel) 
complete service and leave, the gate remains closed until the next arrival 
epoch. At this point, any customers that have arrived during the service stage and
are in the waiting area, together with the newly arrived customer, are all admitted 
for service.

We consider the synchronous version of the renewal arrival process with $S_0=0$
and interarrival times $\{\tau_m\}$, $m=1,2,\ldots$, that are i.i.d. positive 
random variables with common distribution $B$. Thus $S_m= S_{m-1}+\tau_m$.
Let $\{T_n\}$ denote the epochs of service stage initiations when the gate 
momentarily opens. The beginning of a service 
stage always coincides with an arrival epoch of a customer and hence the
point process $\{T_n\}$ is a subset of the arrival process $\{S_m\}$. Assuming 
that at time $T_0=0$ there are $K_0$ customers
at the gate, including the customer who arrives at time $S_0=0$ 
(using a slightly different numbering scheme from the previous section), the first 
service stage serves $K_0$ customers and is concluded at time 
$T_1:=\min\big\{S_m: S_m>\max \big(\sigma_1,\ldots,\sigma_{K_0}\big) \big\}$. 
The dynamics of the system can be described for $n=1,2,\ldots$ as follows
\begin{eqnarray*}
&& L_n \;=\; \inf \big\{ m: S_m > T_{n-1}+ \max \big( \sigma_{L_{n-1}+1},
\ldots, \sigma_{L_{n-1}+K_{n-1}} \big) \big\} \\
&& K_n \;=\; L_n-L_{n-1} \\
&& T_n \;=\; S_{L_n}.
\end{eqnarray*}
It is easy to see that $\{K_n\}$ is a Markov 
chain with state space $\{1,2,3,\ldots\}$  and transition probabilities given by
\[
P_{ij}:=\bP \left(K_{n+1}=j \mid K_n=i \right) = \bP \left(S_{L_n+j-1} < S_{L_n}+ 
\max(\sigma_{L_n+1},\sigma_{L_n+2},\ldots,\sigma_{L_n+i})  \leq  S_{L_n+j} \right)
\]
To simplify the notation, since $P_{ij} = \bP(K_1=j \mid K_0=i)$,
\begin{eqnarray} \nonumber
P_{ij} &=& \bP \left(S_{j-1} < \max(\sigma_1,\sigma_2,\ldots,\sigma_i) \leq  S_j \right) \\   \nonumber
\nonumber 
&=& \bP \left(\max(\sigma_1,\sigma_2,\ldots,\sigma_i) \leq  S_j \right) - 
\bP \left(\max(\sigma_1,\sigma_2,\ldots,\sigma_i) \leq  S_{j-1} \right) \\ \nonumber
&=& \bE(1-e^{-\mu S_j})^i -  \bE(1-e^{-\mu S_{j-1}})^i \\  
&=& \sum_{k=0}^i \left(i \atop k \right) (-1)^k \bE \left[e^{-k\mu S_j} \right] \;-\;
\sum_{k=0}^i \left(i \atop k \right) (-1)^k \bE \left[e^{-k\mu S_{j-1}} \right] .   \label{f-Pij}
\end{eqnarray}
(The third equality stems from the fact that the $\sigma_i$'s are i.i.d. $\mbox{exp}(\mu)$.)
Since the arrivals are renewal, $S_j:=\tau_1+\cdots+\tau_j$ and 
$\bE[e^{-sS_j}] = \widehat{B}(s)^j$ where $\widehat{B}(s)
:=\int_0^\infty e^{-sx}dB(x)$ is the Laplace transform of the distribution $B$.
Thus, from (\ref{f-Pij}), 
\begin{eqnarray} \label{Pij-gated}
P_{ij} &=& \sum_{k=0}^i \left(i \atop k \right) (-1)^k \widehat{B}(k\mu)^{j-1} \left(\widehat{B}(k\mu)-1\right) .
\end{eqnarray}
The basic tool for establishing not only the positive recurrence of $\{K_n\}$ 
but also the finiteness of the first moment of its stationary distribution
is the following generalization of the classic criterion of Foster which we 
state in the following Theorem. (This is Proposition 2.9 of Tweedie 
\cite[p. 829]{Tweedie}. See also Meyn and Tweedie
\cite{MT} for a comprehensive account of Foster--Lyapunov drift criteria 
in general state space Markov Chains.)
\begin{theorem}[Foster--Lyapunov Criterion]\label{th:Foster-Liapunov} Suppose 
	that a Markov Chain with countable state space, say $\bZ_+$, and transition
	probability matrix $P_{ij}$ is irreducible and let $N$ be a given natural
	number. If $V:\bZ_+\to \bR_+$, is a non-negative and $h:\bZ_+\to \bR_+$ a
	strictly positive function on the state space such that
	\begin{eqnarray}
		\sum_{j\in \bZ_+} P_{ij}V(j) &\leq  & b,\hspace{1.14in}i \leq  N,
		\label{precurrent1} \\
		\sum_{j\in \bZ_+} P_{ij}V(j) &\leq  & V(i)-h(i) ,\hspace{0.4in} i>N,  \label{precurrent2}
	\end{eqnarray}
	where $b>0$, then the Markov Chain is positive recurrent and its (unique, 
	due to its irreducibility) stationary distribution, $\pi$, satisfies $\sum_j\pi(j) h(j) <\infty$. 
	Furthermore we have 
	$\sum_{j}P_{ij}^n h(j) \rightarrow \sum_j \pi_j h(j)$ as $n \rightarrow \infty$.
\end{theorem}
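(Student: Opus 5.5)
The statement is the Foster--Lyapunov criterion (Tweedie's Proposition 2.9), and I would prove it by a comparison/supermartingale argument on the return time to the finite set $C:=\{0,1,\ldots,N\}$.

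\emph{Step 1: control of the return time.} Let $\tau_C:=\inf\{n\geq 1: X_n\in C\}$ and define
\[
U(i):=\bE_i\sum_{n=0}^{\tau_C-1} h(X_n).
\]
For a start $i>N$, the process $V(X_{n\wedge\tau_C})+\sum_{k<n\wedge\tau_C}h(X_k)$ is a nonnegative supermartingale by (\ref{precurrent2}). Optional stopping, monotone convergence and $V\geq 0$ then give $U(i)\leq V(i)$ for $i>N$.

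For a start $i\in C$, I condition on the first step and use (\ref{precurrent1}):
\[
U(i)\leq h(i)+\sum_{j>N}P_{ij}V(j)\leq h(i)+b<\infty.
\]
Since $h>0$ is bounded below on the finite set $C$, this also gives $\bE_i\tau_C<\infty$ for $i\in C$ in the case $\inf_{j>N} h(j)>0$.

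\emph{Step 2: positive recurrence.} This is the subtle point when $h$ may decay. The general fix is to first apply the argument with $h$ replaced by a constant; that step alone needs $V(i)-h(i)\leq V(i)-\epsilon$, which the hypotheses do not directly give. Instead I would argue via the finite set $C$ directly. The chain watched only at its visits to $C$ is an irreducible chain on a finite set, so it is positive recurrent. Its invariant measure $\nu$ lifts, through the excursion formula
\[
\mu(j):=\sum_{i\in C}\nu(i)\,\bE_i\sum_{n=0}^{\tau_C-1}\ind(X_n=j),
\]
to an invariant measure $\mu$ of the full chain, and Step 1 gives $\sum_j\mu(j)h(j)=\sum_{i\in C}\nu(i)U(i)<\infty$.

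To get positive recurrence I need $\sum_j\mu(j)<\infty$, i.e.\ $\bE_i\tau_C<\infty$. This is the main obstacle. I would first check whether the source's $h$ is intended to be bounded below, as in Tweedie's formulation where $h\geq 1$ is typically assumed. If it is not, I would deduce finiteness of $\mu$ from the summability $\sum_j\mu(j)h(j)<\infty$ together with irreducibility, and otherwise cite the reference.

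\emph{Step 3: convergence.} With aperiodicity, which the applications in the paper provide, the convergence $\sum_j P^n_{ij}h(j)\to\sum_j\pi_j h(j)$ follows from the ergodic theorem for positive recurrent chains. Pointwise convergence of $P^n_{ij}$ is upgraded to convergence of these $h$-weighted sums by a uniform-integrability argument: the bound $\sum_j P^n_{ij}V(j)\leq V(i)+nb$ is too crude for that purpose. The usual route is instead a coupling argument combined with the $h$-regularity obtained in Step 1.
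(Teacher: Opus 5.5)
Your Step 1 is correct. If $h\ge\varepsilon>0$, it also closes Step 2: then $\varepsilon\,\bE_i\tau_C\le U(i)<\infty$ for every $i$, so the chain watched on $C$ is an honest irreducible chain on a finite set. Your excursion measure $\mu$ is then invariant with total mass $\sum_{i\in C}\nu(i)\bE_i\tau_C<\infty$, and $\sum_j\mu(j)h(j)=\sum_{i\in C}\nu(i)U(i)<\infty$.

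The genuine gap is the case you try to rescue at the end of Step 2. When $h$ is merely strictly positive, you cannot deduce $\sum_j\mu(j)<\infty$ from $\sum_j\mu(j)h(j)<\infty$ and irreducibility, because the statement is then false. Take the reflected symmetric walk on $\{0,1,2,\ldots\}$ with $P_{01}=1$ and $P_{i,i+1}=P_{i,i-1}=\tfrac12$ for $i\ge1$; it is irreducible and null recurrent. Let $N=0$, $h(j)=2^{-j}$, $V(0)=0$, and $V(i)=\bE_i\sum_{n<\tau_C}h(X_n)$ for $i\ge1$.
\begin{itemize}
\item The expected number of visits to $j$ before hitting $0$ from $i$ is $2\min(i,j)$, so $V(i)=\sum_{j\ge1}2\min(i,j)2^{-j}\le4$.
\item First-step analysis gives $\sum_jP_{ij}V(j)=V(i)-h(i)$ for $i\ge1$, and $\sum_jP_{0j}V(j)=V(1)=2=:b$.
\item All hypotheses hold, yet your $\mu$ is $\mu(0)=1$ and $\mu(j)=2$ for $j\ge1$. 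Thus $\sum_j\mu(j)h(j)=3$ while $\sum_j\mu(j)=\infty$.
\end{itemize}
Transient examples also exist, e.g.\ $P_{i,i+1}=p_i=1-P_{i,0}$ with $\prod_ip_i>0$, the same $h$, and the same definition of $V$. There even your claim that the chain watched on $C$ is a genuine finite Markov chain fails, since $\bP_i(\tau_C<\infty)<1$.

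So the statement as written cannot be proved, by argument or by citation. The hypothesis must be $\inf h>0$, equivalently $h\ge1$ after rescaling $V$ and $b$. Standard formulations such as Meyn--Tweedie's drift condition (V3) take $h\ge1$. Your suspicion was right, and the paper's application, with $h(i)=i\ge1$, is unaffected.

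The paper gives no proof of its own; it quotes Tweedie's Proposition~2.9. Under $\inf h>0$, your Steps 1--2 are essentially the standard argument behind that citation.

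For the last assertion you are right that aperiodicity must be added. It is missing from the statement, and for a periodic chain $\sum_jP^n_{ij}h(j)$ need not converge.

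Step~3 as written is only a pointer, and it needs nothing from Step~1 beyond $\pi(h)<\infty$. Fix a state $a$. By Kac's formula, $\bE_a\sum_{n<\tau_a}h(X_n)=\pi(h)/\pi_a<\infty$. Let $\tau^0_i:=\inf\{n\ge0:X_n=i\}$. Irreducibility gives $\bP_a(\tau^0_i<\tau_a)>0$, and hence $\bE_i\sum_{n<\tau^0_a}h(X_n)<\infty$ for every $i$. Now use the last-exit decomposition
\[
\sum_jP^n_{ij}h(j)=\bE_i\big[h(X_n);\tau^0_a>n\big]+\sum_{m=0}^nP^{m}_{ia}\,\bE_a\big[h(X_{n-m});\tau_a>n-m\big].
\]
The first term tends to $0$. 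In the second, $P^m_{ia}\to\pi_a$ by aperiodicity, so dominated convergence gives the limit $\pi_a\bE_a\sum_{n<\tau_a}h(X_n)=\sum_j\pi_jh(j)$. No coupling or uniform-integrability bound on $P^nV$ is needed.
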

We now use the above to prove the following
\begin{proposition} Suppose that the interarrival times have finite second 
	moment, i.e.\ $\bE\tau^2 <\infty$. Then the Markov chain $\{K_n\}$ defined
	above is positive recurrent and its invariant distribution $\{\pi_i\}$
	satisfies the equations
\begin{eqnarray} \label{invariant-GM}
\pi_i &=& \sum_{j=1}^\infty \pi_j P_{ji} \hspace{0.3in} i=1,2,3,\ldots \\ 
\nonumber
1  &=&\sum_{i=1}^\infty \pi_i .
\end{eqnarray}
\end{proposition}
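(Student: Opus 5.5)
We apply Theorem \ref{th:Foster-Liapunov} with the quadratic test function $V(i)=i^2$ and a linear penalty $h(i)=c\,i$ for a small constant $c>0$. The stated equations then follow from positive recurrence. Irreducibility is immediate from (\ref{Pij-gated}). The representation $P_{ij}=\bP(S_{j-1}<M_i\le S_j)$, where $M_i:=\max(\sigma_1,\ldots,\sigma_i)$, gives $P_{ij}>0$ for all $i,j$. Given $M_i$, the number of customers served in the next stage is $K_1=N(0,M_i]+1$, where $N$ counts the arrivals $S_1,S_2,\ldots$ after time $0$; this is because $K_1=j$ exactly when $S_{j-1}<M_i\le S_j$.

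The first step is to bound the first two moments of $N(0,t]$ for the renewal counting process, uniformly in $t$. Since $\bE\tau^2<\infty$, the elementary renewal theorem together with Lorden's inequality give constants $a_1,a_2$ with
\[
\bE N(0,t]\le t/\bE\tau + a_1, \qquad \bE N(0,t]^2\le (t/\bE\tau)^2 + a_2(1+t)\quad\mbox{for all } t\ge 0 .
\]
Alternatively, one can use the standard bound $\bE N(0,t]^2\le U(t)+2\int_0^t U(t-s)\,dU(s)$ with $U(t)\le t/\bE\tau+C$. This is where the second moment hypothesis is used.

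Next, set $m:=1/\bE\tau$ and condition on $M_i$, using the independence of $M_i$ from the arrival process:
\[
\sum_j P_{ij}\,j^2=\bE\big[(N(0,M_i]+1)^2\big]\le m^2\,\bE M_i^2 + C\,\bE M_i + C .
\]
For exponential service times, $M_i$ has the law of $\sum_{k=1}^i E_k/(k\mu)$ with $E_k$ independent standard exponentials. Hence $\bE M_i=H_i/\mu$ and $\bE M_i^2=(H_i^2+\sum_{k\le i}k^{-2})/\mu^2$, where $H_i$ is the harmonic number, so both moments are $O((\log i)^2)$. The drift therefore satisfies
\[
\sum_j P_{ij}V(j)\le C'(\log i)^2+C'' ,
\]
and for $i>N$ with $N$ large this is at most $i^2-c\,i$. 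For $i\le N$ the same estimate gives a uniform bound $b$. Theorem \ref{th:Foster-Liapunov} then yields positive recurrence and $\sum_j\pi_j\,j<\infty$, i.e.\ the stationary distribution has finite mean. The chain's invariant distribution is the unique probability solution of (\ref{invariant-GM}), by irreducibility and positive recurrence.

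The main obstacle is the uniform second-moment bound for $N(0,t]$ under only $\bE\tau^2<\infty$. I would handle it through the renewal-equation bound above rather than asymptotics, so that the constants are valid for all $t$ and not merely for large $t$. Even a cruder bound $\bE N(0,t]^2\le C(1+t)^2$ would suffice, since it still yields $O((\log i)^2)$ growth of the drift, far below $i^2$. The plain linear choice $V(i)=i$ with $h\equiv 1$ would also work, but it only gives positive recurrence; the quadratic $V$ is what gives the finite mean.
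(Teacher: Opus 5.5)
Your proof is correct. It follows the same overall strategy as the paper: Foster--Lyapunov, conditioning on $M_i$, renewal bounds, and logarithmic growth of the maximum of $i$ exponentials. The difference is that you use a heavier test function.

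\textbf{The paper's route.} The paper takes $V(i)=Ci$ with $C>1$ and $h(i)=i$. Only Lorden's bound on the renewal function is needed, giving $\sum_j P_{ij}Cj\le C\rho H_i+Cb_0$ with $\rho=(\mu\bE\tau)^{-1}$. Since $H_i=O(\log i)$, this is below $Ci-i$ for large $i$, so positive recurrence \emph{and} $\sum_j\pi_j j<\infty$ follow at once.

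\textbf{Your route.} Your choice $V(i)=i^2$ forces you to control $\bE N(0,t]^2$ and $\bE M_i^2$. The identity $\bE N(0,t]^2=U(t)+2\int_0^t U(t-s)\,dU(s)$, combined with $U(t)\le t/\bE\tau+C$, handles this correctly. So nothing breaks, but it is extra work for the same conclusion as stated.

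\textbf{Your closing remark is mistaken.} A linear $V$ gives only positive recurrence when paired with $h\equiv 1$. The pair $V(i)=Ci$, $h(i)=i$ already yields the finite mean, because $\bE[K_1\mid K_0=i]$ grows only like $\log i$, not like $i$.

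\textbf{What the quadratic choice buys, and you leave unused.} Since $\sum_j P_{ij}j^2=O((\log i)^2)$, you may take $h(i)=i^2/2$ and obtain $\sum_j\pi_j j^2<\infty$. No linear $V$ can deliver this through Theorem~\ref{th:Foster-Liapunov}, since there $h(i)\le V(i)=Ci$ for large $i$.

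Your explicit check that $P_{ij}>0$, via $P_{ij}=\bP(S_{j-1}<M_i\le S_j)$, is a small addition the paper leaves implicit.
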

\begin{proof}
To establish the positive recurrence of $\{K_n\}$ we shall use Theorem 
\ref{th:Foster-Liapunov} with the choice of functions
\begin{equation} \label{Vf-choice}
h(i)=i  \;\; \mbox{ and } \;\; V(i)=C\cdot i, \;\; i=1,2,\ldots,
\end{equation}
where $C>1$. Suppose that $U$ is the renewal function associated with the arrival process,
i.e.\ $U(t) := \sum_{k=1}^\infty \bP(T_k \leq  t)$ for $t \geq 0$.  If $\{Y_n\}$ denotes the 
sequence of stage lengths, as in the previous section, then
\begin{equation} \label{cond_mean}
\bE[K_{n+1}\mid K_n,Y_n] \;=\;U(Y_n).
\end{equation}
In a renewal process with increments having finite second moment the renewal 
function satisfies Lorden's inequality (see \cite[p.\ 160]{Asmussen}), namely
\[
U(t) \; \leq  \; \frac{t}{\bE\tau}+ \frac{\bE[\tau^2]}{(\bE\tau)^2}.
\]
Using Lorden's inequality we obtain the following inequality from (\ref{cond_mean})
\[
\bE[K_{n+1}\mid K_n,Y_n] \;\leq  \; \frac{Y_n}{\bE\tau}
+ \frac{\bE[\tau^2]}{(\bE\tau)^2}.
\]
Given $K_n$, $Y_n$ is the maximum of $n$ independent exponential random variables 
with rate $\mu$ and thus
\begin{equation} \label{conditional-expectation}
\bE[K_{n+1}\mid K_n] \;\leq  \; \frac{1}{\mu \bE\tau} 
\left( 1+ \frac{1}{2}+\cdots+ \frac{1}{K_n} \right) + \frac{\bE[\tau^2]}{(\bE\tau)^2}.
\end{equation}
Let then $\rho:=(\mu \bE\tau)^{-1}$, $b_0:= \frac{\bE[\tau^2]}{(\bE\tau)^2}$, 
and suppose $C>1$. With these definitions (\ref{conditional-expectation}) gives
\begin{equation} \label{First-FL}
\sum_{j=1}^\infty P_{ij} C j \; \leq  \; 
C \rho \left( 1+\frac{1}{2} + \cdots + \frac{1}{i} \right)  + C b_0 \;\;\;
\mbox{ for all } i\in \mathbb{N}.
\end{equation}
From the properties of the harmonic sequence 
$H_n:=1+\frac{1}{2} +\cdots + \frac{1}{n}$ 
and the fact that $C>1$ it follows that there exists $N>0$ such that 
\begin{equation} \label{h-positive}
\rho \, C \left(1+\frac{1}{2} + \cdots + \frac{1}{i} \right) + b_0 \; < \; (C-1)\, i \;\; \;\; 
\mbox{for all $i > N$.}
\end{equation}
From (\ref{First-FL}), (\ref{h-positive}),
\begin{equation}  \label{Foster-Lyapunov-1}
\sum_{j=1}^\infty P_{ij} C j  
\;<\;  Ci - i \;\; \;\; \mbox{for all $i > N$.}
\end{equation}
This is indeed equation (\ref{precurrent2}) with the choice (\ref{Vf-choice}).
Taking $b: = C \rho  \left( 1+\frac{1}{2} + \cdots + \frac{1}{N} \right) + C b_0$ we also see 
from (\ref{First-FL}) that
\begin{equation}  \label{Foster-Lyapunov-2}
\sum_{j=1}^\infty P_{ij} C j \; \leq  \; b \hspace{0.2in} \mbox{ for $i \leq  N$.}
\end{equation}
Therefore, by Theorem \ref{th:Foster-Liapunov}, we conclude that the Markov Chain 
$\{K_n\}$ is positive recurrent with invariant distribution given by the solution of 
(\ref{invariant-GM}).
\end{proof}

Suppose that $\{K_n\}$ is a stationary version of the Markov chain with stationary 
distribution 0$\pi$ and let
$
\phi(z) \;:=\; \sum_{i=1}^\infty \pi_i z^i \;=\; \bE[z^{K_0}]
$
denote the corresponding probability generating function. Instead of attempting 
to find the solution of (\ref{invariant-GM})
we will concentrate on the probability generating function $\phi$.
Recall that the descending factorial
moments and the derivatives of $\phi$ at evaluated at $z=1$  are related via
\begin{equation}  \label{descending_factorial}
\sum_{n=1}^\infty n(n-1)(n-2)\cdots(n-k+1) \pi_n  \;=\; \phi^{(k)}(1).
\end{equation}
We will obtain an infinite system which is satisfied by the descending 
factorial moments $\phi^{(m)}(1)$, $m=1,2,\ldots$, as described in the following
\begin{proposition}
Define the quantities
\begin{equation} \label{def_xm}
x_m:=\frac{\phi^{(m)}(1)}{m!},\hspace{0.1in} \mbox{ and } \hspace{0.1in}
a_{mk}:= \frac{\widehat{B}(k\mu)^{m-1}}{(1-\widehat{B}(k\mu))^{m}}, 
\hspace{0.1in} k,m=1,2,\ldots.
\end{equation}
Then the sequence of factorial moments, $\{x_m\}$, $m=1,2,\ldots$, satisfies the 
infinite linear system
\begin{equation} \label{xm}
x_m = \sum_{k=1}^\infty x_k (-1)^{k-1}a_{mk}, \hspace{0.3in} m=1,2,\ldots
\end{equation}
\end{proposition}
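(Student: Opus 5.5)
The plan is to turn the stationarity equations (\ref{invariant-GM}) into a functional equation for $\phi$, and then to read off Taylor coefficients of both sides at $z=1$. Write $b_k:=\widehat{B}(k\mu)\in(0,1)$ for $k\geq 1$, and note that $b_0=\widehat{B}(0)=1$.

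\emph{Step 1: the one-step generating function.} First I compute $\sum_{j\ge1}P_{ij}z^j$ from (\ref{Pij-gated}). For fixed $k$ the sum over $j$ is geometric:
\[
\sum_{j\ge 1} b_k^{\,j-1}(b_k-1)z^j=-\frac{z(1-b_k)}{1-b_kz}, \qquad |z|<1/b_k .
\]
The $k=0$ term vanishes because $b_0=1$. Hence
\[
\sum_{j}P_{ij}z^j=\sum_{k=1}^i \binom{i}{k}(-1)^{k-1}\frac{z(1-b_k)}{1-b_kz}.
\]
Multiplying by $\pi_i$, summing over $i$, and using $\phi(z)=\sum_i\pi_i\sum_jP_{ij}z^j$ (stationarity), I interchange the $i$ and $k$ sums. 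The inner sum is $\sum_{i\ge k}\pi_i\binom{i}{k}=\phi^{(k)}(1)/k!=x_k$ by (\ref{descending_factorial}). This gives the key identity
\[
\phi(z)=\sum_{k=1}^\infty (-1)^{k-1}x_k\,\frac{z(1-b_k)}{1-b_kz}.
\]

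\emph{Step 2: Taylor expansion at $z=1$.} Put $z=1+w$ and $r_k:=b_k/(1-b_k)$. Then
\[
\frac{z(1-b_k)}{1-b_kz}=\frac{1+w}{1-r_kw}=(1+w)\sum_{n\ge0}r_k^nw^n .
\]
The coefficient of $w^m$ is $r_k^m+r_k^{m-1}=r_k^{m-1}(1+r_k)=b_k^{m-1}/(1-b_k)^m=a_{mk}$. On the left, the coefficient of $w^m$ in $\phi(1+w)$ is $\phi^{(m)}(1)/m!=x_m$. Equating coefficients of $w^m$ yields (\ref{xm}) for every $m\ge1$.

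\emph{Step 3: justification, the main obstacle.} All the analytic content is in the interchanges of summation and in the termwise Taylor expansion. Step 1 needs $\sum_i\pi_i\sum_k\binom{i}{k}|\cdots|<\infty$, which amounts to $\sum_i\pi_i 2^i<\infty$ near $z=1$. Step 2 needs the double series $\sum_k\sum_m |x_k| r_k^m|w|^m$ to converge for small $|w|$.

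Since $b_k\downarrow 0$ (assuming $B$ has no atom at $0$), the radii $1/r_k$ increase, so the binding one is $1/r_1=(1-b_1)/b_1$. Absolute convergence then follows if $\phi$ is analytic in a disc around $1$ of radius larger than $1$, giving geometric decay of the $x_k$ (e.g.\ $x_k\le \phi(1+\delta)/\delta^k$). I would first try to obtain this by strengthening the Foster--Lyapunov argument of the preceding proposition with the exponential test function $V(i)=\theta^i$. The ingredients are the bound $\sum_jP_{ij}\theta^j\le\sum_k\binom ik\frac{\theta(1-b_k)}{1-b_k\theta}$, which is controlled by $\bE[\theta^{U\text{-type count}}]$ over a horizon of order $\log i/\mu$, together with Theorem \ref{th:Foster-Liapunov} with $h(i)=\theta^i$. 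This should hold at least in light traffic ($\rho$ small).

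Failing that, I would state the identity as holding whenever all factorial moments are finite and the rearrangements converge absolutely. Each coefficient identity can also be verified directly by differentiating the series of Step 1 $m$ times under the sum, which is legitimate by dominated convergence once $\sum_k x_k r_k^{m}<\infty$.
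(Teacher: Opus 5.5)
Your Steps 1--2 are exactly the paper's proof. The paper derives the same representation $\phi(z)=z\sum_{k\ge1}\phi^{(k)}(1)\frac{(-1)^{k-1}}{k!}\,\frac{1-\widehat{B}(k\mu)}{1-\widehat{B}(k\mu)z}$ by summing the geometric series in $j$ and interchanging the $k$-sum with the expectation over $K_0$. It then differentiates $m$ times termwise at $z=1$ using $D^m\bigl[z(1-q)/(1-qz)\bigr]_{z=1}=m!\,q^{m-1}/(1-q)^m$, which is your coefficient $r_k^{m-1}(1+r_k)$ in the variable $w=z-1$.

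The paper performs both interchanges without comment, so your Step 3 goes beyond it. Your caveat is in fact necessary, not merely cautious.

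\emph{The identity fails in heavy traffic.} Since $P_{1j}=(1-b_1)b_1^{j-1}$ and $\pi_j\ge\pi_1P_{1j}$, one gets $x_k=\sum_i\binom{i}{k}\pi_i\ge\frac{\pi_1}{b_1}\bigl(\frac{b_1}{1-b_1}\bigr)^k$. So when $\widehat{B}(\mu)\ge 1/2$, the terms of the $m=1$ series in (\ref{xm}) (where $a_{1k}\ge 1$) do not tend to $0$, and the identity fails. The proposition therefore genuinely needs the light-traffic condition $\widehat{B}(\mu)<1/2$, which the paper imposes only afterwards.

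\emph{Under light traffic your plan works.} If $\widehat{B}(\mu)<1/2$, your Lyapunov argument closes even with the crude bound you wrote. Take $\theta\in(2,1/b_1)$. Monotonicity of $b\mapsto\theta(1-b)/(1-\theta b)$ gives $\sum_jP_{ij}\theta^j\le\frac{\theta(1-b_1)}{1-\theta b_1}\,2^i$. Theorem \ref{th:Foster-Liapunov} with $V(i)=\theta^i$ and $h(i)=\theta^i/2$ then yields $\phi(\theta)<\infty$, hence $\sum_k x_k=\phi(2)-1<\infty$. Together with $r_k\le r_1$, this makes both of your interchanges absolutely convergent for $|z-1|<1/r_1$.
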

\begin{proof}
Multiplying (\ref{Pij-gated}) by $z^j$ and summing over $j$ we obtain
\begin{eqnarray} \nonumber
\bE[z^{K_1} \mid K_0=i] &=&  \sum_{j=1}^\infty z^j \sum_{k=1}^\infty \left( i \atop k \right)
(-1)^k \widehat{B}^{j-1}(k\mu )[\widehat{B}(k\mu )-1] \\
& = & \sum_{k=1}^\infty \left( i \atop k \right) (-1)^k [\widehat{B}(k\mu )-1]
\sum_{j=1}^\infty z^j \widehat{B}^{j-1}(k\mu ).   \label{MC}
\end{eqnarray}
Since the geometric series in the last sum converges (at least for $|z|<1$) (\ref{MC}) becomes
\[
\bE[z^{K_1}\mid K_0]=\sum_{k=1}^{\infty }  \left(K_0 \atop k \right)
(-1)^k \frac{\widehat{B}(k\mu )-1}{1-\widehat{B}(k\mu )z}z \,.
\]
Taking expectation with respect to $K_0$, in the above equation (and 
interchanging the summation and the expectation) we obtain the following
expression for the generating function of the stage duration:
\begin{eqnarray} \label{gf-0}
\bE[z^{K_0}] &=&  \sum_{k=1}^\infty \bE[K_0(K_0-1)\cdots(K_0-k+1)]
\frac{(-1)^k}{k!}\frac{\widehat{B}(k\mu )-1}{1-\widehat{B}(k\mu )z}z.
\end{eqnarray}
Using also (\ref{descending_factorial}) 
%
we can rewrite (\ref{gf-0}) as
\begin{eqnarray}  \label{Generating function}
\phi(z) \;=\; z\sum_{k=1}^\infty\phi^{(k)}(1)\frac{(-1)^{k-1}}{k!}
 \frac{1-\widehat{B}(k\mu )}{1-\widehat{B}(k\mu )z}.
\end{eqnarray}
This last expression represents the probability generating function of 
the steady-state number of customers served in a stage (with the exception of 
the customer who initiates the stage) as a linear combination of p.g.f.'s 
of geometric random variables, the $k$th of which has probability of success
$1-\widehat{B}(k\mu )$. Of course (\ref{Generating function}) involves the
factorial moments of the unknown distribution on the right hand side. Note 
that, if
$
g(z):=z\frac{1-q}{1-qz},
$
then using Leibniz' rule
$D^mg(1) = \frac{m!q^{m-1}}{(1-q)^{m}}$ and thus differentiating
(\ref{Generating function}) $m$ times with respect to $z$ term by term 
and evaluating at $z=1$ gives
\begin{equation} \label{Generating function2}
\phi^{(m)}(1)=\sum_{k=1}^\infty \phi^{(k)}(1)\frac{(-1)^{k-1}}{k!}\; 
 \frac{m!\widehat{B}(k\mu)^{m-1}}{(1-\widehat{B}(k\mu))^{m}}
\hspace{0.3in} m=1,2,\ldots
\end{equation}
Dividing both sides of the above equation by $m!$ and using the definitions 
(\ref{def_xm}) we obtain the system (\ref{xm}).
\end{proof}

There remains of course the question of the solution of the system (\ref{xm}). 
This issue will be addressed in the next section. Assuming that the sequence 
$\{x_m\}$ has been determined note from (\ref{Generating function}) that
\begin{eqnarray} \nonumber
\sum_{i=1}^\infty \pi_i z^i &=& \sum_{k=1}^\infty x_k (-1)^{k-1} z\,
\frac{1-\widehat{B}(k\mu )}{1-\widehat{B}(k\mu )z} 
= \sum_{k=1}^\infty x_k (-1)^{k-1} \sum_{i=1}^\infty z^i 
\left( 1-\widehat{B}(k\mu ) \right) \widehat{B}(k\mu )^{i-1} \\ \nonumber
&=&  \sum_{i=1}^\infty z^i \sum_{k=1}^\infty x_k (-1)^{k-1}  
\left( 1-\widehat{B}(k\mu ) \right) \widehat{B}(k\mu )^{i-1} .
\end{eqnarray}
Thus we have the following expression for the stationary distribution 
\begin{equation} \label{pi-final}
\pi_i \;=\; \sum_{k=1}^\infty x_k (-1)^{k-1}  \left( 1-\widehat{B}(k\mu ) \right) 
\widehat{B}(k\mu )^{i-1}, \hspace{0.2in} i=1,2,\ldots.
\end{equation}
Clearly, from (\ref{def_xm}), the quantities $(x_k)$ are positive and hence 
(\ref{pi-final}) gives the stationary distribution as an alternating sum of geometric 
probabilities.

\subsection{The light traffic case}

The infinite system of equations which is satisfied by the $x_m$ must 
be complemented by an additional condition that will give a non-homogeneous 
system. The approach we follow provides a solution in the light traffic case,
which we define in this context by means of the condition
\begin{equation} \label{light_traffic_GM}
\widehat{B}(\mu) < \frac{1}{2}.
\end{equation}
In particular, if the arrival process is Poisson ($\lambda$) and if we set 
$\rho:=\lambda/\mu$ then $\widehat{B}(\mu) = 
\frac{\lambda}{\lambda +\mu} = \frac{\rho}{1+\rho}$ and thus, for a
gated $M/M/\infty$ system, the {\em pertinent light traffic condition is $\rho<1$.}

Light traffic condition (\ref{light_traffic_GM}) results from the requirement that 
the power series
\[
\sum_{m=0}^\infty \left| z-1 \right|^m \widehat{B}(k\mu)^{m}
\]
have convergence radius greater than 1  for all $k \in \mathbb{N}$. 
Since the sequence $\{\widehat{B}(k\mu)\}_{k=1,2,\ldots}$ is decreasing
it is enough to require that the series 
$\sum_{m=0}^\infty 2^m \widehat{B}(\mu)^{m}$ converges or
equivalently that (\ref{light_traffic_GM}) holds.

This in turn implies that the power series for $\phi(z)$ around the 
point $z=1$ has radius of convergence at least one. Hence in the power series
\[
\phi(z)=\sum_{m=0}^\infty \frac{(z-1)^m}{m!}\phi^{(m)}(1)
\]
we may take $z=0$. Clearly $\phi(0)=0$ (since a service stage consists of
at least one customer). Thus in our notation 
$0 \;=\;  1 + \sum_{m=1}^\infty x_m (-1)^m $ 
(because $\frac{\phi^{(0)}(1)}{0!}=\phi(1)=1$) and hence
\begin{equation} \label{xm0}
-1 \;=\; \sum_{m=1}^\infty x_m (-1)^m.
\end{equation}
For $m=1$ in (\ref{xm}), taking into account (\ref{def_xm}), we have
\[
x_1 \;=\; \sum_{k=1}^\infty x_k (-1)^{k-1}  \frac{1}{1-\widehat{B}(k\mu)} \;=\;
 -\sum_{k=1}^\infty x_k (-1)^{k} + \sum_{k=1}^\infty x_k (-1)^{k}  
 \frac{\widehat{B}(k\mu)}{1-\widehat{B}(k\mu)}
\]
and hence we obtain the system
\begin{eqnarray}  \label{xm_system}
-1 &=& - x_1 \frac{1-2\widehat{B}(\mu)}{1-\widehat{B}(\mu)} 
+ \sum_{k=2}^\infty x_k (-1)^{k-1} \frac{\widehat{B}(k\mu)}{1-\widehat{B}(k\mu)} \\   \nonumber
0 &=& x_m \left(  \frac{(-1)^{m-1}\widehat{B}(m\mu)^{m-1}}{(1-\widehat{B}(m\mu))^{m}}  -1 \right)
+  \sum_{{{k=1}\atop {k\ne m}}}^\infty x_k (-1)^{k-1} \frac{\widehat{B}(k\mu)^{m-1}}{(1-\widehat{B}(k\mu))^{m}}, \\ \nonumber 
&&  \hspace{3.4in} m=2,3,\ldots.
\end{eqnarray}
In particular, when the arrival process is Poisson ($\lambda$), with 
$\widehat{B}(s) = \frac{\lambda}{\lambda+s}$,
\begin{eqnarray}  \nonumber
-1 &=& -x_1 (1-\rho)  + \sum_{k=2}^\infty x_k (-1)^{k-1} \frac{\rho}{k} \\
0 &=& x_m \left( \left(1+\frac{\rho}{m} \right)\left(-\frac{\rho}{m}\right)^{m-1} - 1 \right)  \nonumber
+  \sum_{{{k=1}\atop {k\ne m}}}^\infty x_k (-1)^{k-1} \left(1+\frac{\rho}{k}\right) \left(\frac{\rho}{k}\right)^{m-1}, \\ \label{Poisson_system}
&&  \hspace{3.4in} m=2,3,\ldots.
\end{eqnarray}

\begin{figure}
  \begin{center}
    \includegraphics[width=6.2in]{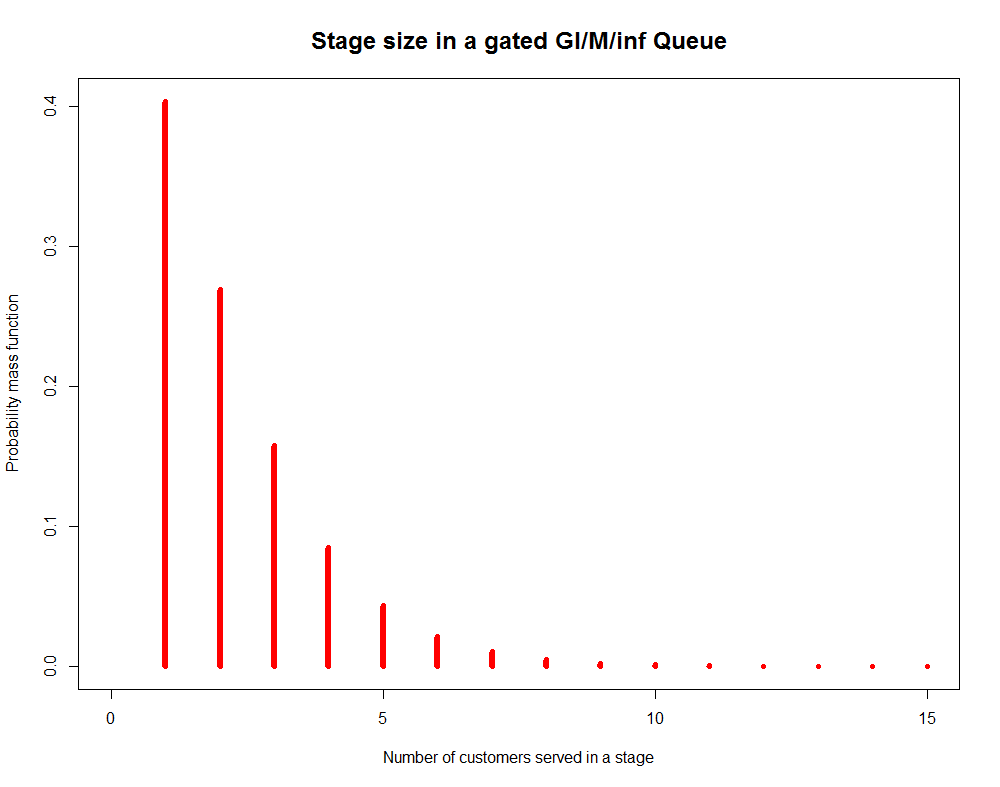}
  \end{center}
  \caption{Arrivals are Poisson ($\lambda$) and $\rho:=\lambda/\mu=0.85$. The system 
  (\ref{xm}) is truncated at $N=100$ and similarly 100 terms are taken
  in the series (\ref{pi-final}).}
  \label{fig6}
\end{figure}


\subsection{The infinite linear system (\ref{Poisson_system})}
Consider now the system (\ref{Poisson_system}) encountered in the analysis of the gated 
$GI/M/\infty$ system. We will transform it in order
to apply the theorem. Set $w_i := i \cdot x_i$, $i=1,2,\ldots$. Then multiplying by $\rho^{-i/2}$ 
the equations for $i \geq 2$ we obtain
\begin{eqnarray}  \nonumber
-1 &=& -w_1 (1-\rho)  + \sum_{j=2}^\infty w_j (-1)^{j-1} \frac{\rho}{j^2} \\
0 &=& w_i \left( \left(1+\frac{\rho}{i} \right)\frac{(-1)^{i-1} \rho^{i/2-1}}{i^i} - \frac{\rho^{-i/2}}{i} \right)  \nonumber
+  \sum_{{{j=1}\atop {j\ne i}}}^\infty w_j (-1)^{j-1} \left(1+\frac{\rho}{j}\right) \frac{\rho^{i/2-1}}{j^i}, \\ 
\label{Poisson_system-1}
&&  \hspace{3.7in} i=2,3,\ldots.
\end{eqnarray}
We will first establish the strict diagonal dominance condition (\ref{strictlyDD}) for the above 
system. For $i=1$ it suffices to show that $1-\rho > \sum_{j=2}^\infty \rho j^{-2}$.
However this sum is equal to $\rho \left( \frac{\pi^2}{6} - 1\right)$. Therefore, in order for the 
strict diagonal dominance to hold we should
have $1-\rho >  \rho \left( \frac{\pi^2}{6} - 1\right)$ or $ \rho < \frac{6}{\pi^2} \approx 0.608$.
Strict diagonal dominance for $i\geq 2$ is equivalent (after canceling the irrelevant 
in this case factor $\rho^{-i/2}$) to 
\begin{equation} \label{ineq-1}
 \left| \left(1+\frac{\rho}{i} \right)\frac{(-1)^{i-1} \rho^{i-1}}{i^i} - \frac{1}{i} \right|
 > \sum_{j=1,\, j\ne i}^\infty \left( 1+ \frac{\rho}{j}\right) \frac{\rho^{i-1}}{j^i}.
\end{equation}
It holds for all $i\geq 2$ that the 
above inequality is satisfied provided that 
\[
 \frac{1}{i} - \left(1+\frac{\rho}{i} \right)\frac{ \rho^{i-1}}{i^i}  >  
 \sum_{j=1,\, j\ne i}^\infty \left( 1+ \frac{\rho}{j}\right) \frac{\rho^{i-1}}{j^i} 
 \Leftrightarrow 
 \frac{1}{i}  >   \sum_{j=1}^\infty \left( 1+ \frac{\rho}{j}\right) \frac{\rho^{i-1}}{j^i} 
= \rho^{i-1}\left(\zeta(i) + \rho \zeta(i+1) \right)
\]
This is equivalent to $1 > i \rho^{i-1} \left( \zeta(i) + \rho \zeta(i+1) \right)$ and
the function on the right hand side of the inequality has its maximum for $i=2$ when 
$\rho<0.5$. Hence diagonal dominance holds if 
$1>\rho \left( \zeta(2) + \rho \zeta(3) \right)$ or $\rho < \rho_1 \approx 0.456$.

It also holds that, for some $K_1>0$ and all $i \geq 2$, 
\[
|a_{ii}|^{-1} = 
\left\vert \left(1+ \frac{\rho}{i}\right) (-1)^{i-1} \frac{\rho^{-1+i/2}}{i^i} 
- \frac{\rho^{-i/2}}{i} \right\vert^{-1} \leq  K_1 i\rho^{i/2} 
\]
hence $\sum_{i=1}^\infty \frac{1}{|a_{ii}|} \leq  \frac{1}{1-\rho} + 
K_1\sum_{i=1}^\infty i\rho^{i/2} <\infty$
provided $\rho<1$. Therefore (\ref{infinite_system_1}) is satisfied.
Also
\[
\sum_{j=1,\, j\ne i}^\infty \vert a_{ij} \vert < 
\sum_{j=1}^\infty \left( 1 + \frac{\rho}{j} \right) \frac{\rho^{-1+i/2}}{j^i} <M< \infty
\;\;\; \mbox{ for some $M>0$ and all $i$.}
\]
This establishes (\ref{infinite_system_2}). Finally, 
$\sum_{i=1}^\infty |a_{i1}| = (1-\rho)+ 
\sum_{i=2}^\infty \left( 1 + \rho \right) \rho^{i/2-1} <\infty$ and
\[
\sum_{i=1}^\infty \vert a_{ij} \vert = 
\sum_{i=1}^\infty \left( 1 + \frac{\rho}{j} \right) \frac{\rho^{-1+i/2}}{j^i} < \infty.
\]
which establishes (\ref{infinite_system_3}), and hence the assumptions of 
Theorem \ref{th:Appendix} hold.


\section{Appendix} \label{sec:Appendix}
\subsection{Infinite linear systems and strictly diagonally dominant 
matrices}\label{sec:Infinite_Systems}
Consider the infinite system of linear equations
\begin{equation} \label{infinite_system}
\sum_{j=1}^\infty a_{ij} x_j \;=\; b_i, \hspace{0.2in} i=1,2,\ldots.
\end{equation}
We suppose that $\{b_i\}$, $i=1,2,\ldots,$ is a {\em bounded} sequence of 
real numbers. We will discuss sufficient conditions under which this system 
has a {\em unique, bounded solution} $\{x_i\}$, $i=1,2,\ldots$ following 
the results in \cite{Shivakumar-Wong} (see also \cite{Shivakumar-SZ}). 
Therein it is shown that, under these conditions, if one considers the 
sequence of {\em the truncated linear systems} indexed by $N$,
\begin{equation} \label{truncated_system}
\sum_{j=1}^N a_{ij} x_j^N \;=\; b_i, \hspace{0.2in} i=1,2,\ldots,N,
\end{equation}
then their solutions converge to a solution of (\ref{infinite_system}). 
This result, besides being used to prove the existence and uniqueness of a 
bounded solution of (\ref{infinite_system}), can be used in practice to 
provide an approximate solution of the system.

Our interest in this result here stems from the occurrence of the infinite 
systems (\ref{y_system}) and (\ref{Poisson_system}) which we have solved
numerically by truncation. In this Appendix we justify this procedure.

An infinite matrix $A:=[a_{ij}]$, $i,j=1,2,3,\ldots,$ is {\em strictly 
diagonally dominant} if it satisfies the conditions
\begin{equation} \label{strictlyDD}
\sigma_i |a_{ii}| \;=\; \sum_{{j=1} \atop {j\ne i}}^\infty |a_{ij}|, 
\hspace{0.2in}\mbox{ with } 0 \leq  \sigma_i < 1, \;\; i=1,2,\ldots.
\end{equation}
Consider also the following three conditions.
\begin{eqnarray} \label{infinite_system_1}
&& \sum_{i=1}^\infty \frac{1}{|a_{ii}|} < \infty,  \\ \label{infinite_system_2}
&& \sum_{{j=1} \atop {j \ne i}}^\infty |a_{ij}| \leq  M, \hspace{0.2in} \mbox{ for some 
$M$ and all $i$.} \\ \label{infinite_system_3}
&& \sum_{i=1}^\infty |a_{ij}| < \infty \;\;\; \mbox{ for each $j$.}
\end{eqnarray}
The following theorem summarizes the results in \cite{Shivakumar-Wong} which are relevant 
in our treatment of the systems (\ref{y_system}) and (\ref{Poisson_system}).
\begin{theorem} \label{th:Appendix} Suppose that the system (\ref{infinite_system}) with 
bounded right hand side has a strictly diagonally dominant matrix $A$ which in addition 
satisfies conditions (\ref{infinite_system_1}) and (\ref{infinite_system_2}). Then, for each 
$N$ the truncated system (\ref{truncated_system}) has a unique solution $(x_i^N)$, 
$i=1,2,\ldots,N$. For each fixed $j$, the sequence $\{x_j^N\}$, $N=j,j+1,\ldots$ is Cauchy 
and thus converges to a limit $x_j$. The sequence $\{x_j\}$ is a bounded solution of 
(\ref{infinite_system}). If, in addition, condition (\ref{infinite_system_3}) is also
satisfied then this bounded solution is unique.
\end{theorem}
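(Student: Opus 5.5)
We prove Theorem \ref{th:Appendix} in the standard way for diagonally dominant infinite systems (as in \cite{Shivakumar-Wong}): invertibility of the truncations, an a priori bound, an estimate of the difference between consecutive truncations, and then a limit passage.

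\emph{Step 1: the truncations are solvable and uniformly bounded.} Each truncated matrix $A_N=[a_{ij}]_{i,j\le N}$ is strictly diagonally dominant, since
\[
\sum_{j\le N,\,j\ne i}|a_{ij}|\le\sigma_i|a_{ii}|<|a_{ii}|.
\]
By the Lévy--Desplanques theorem (Gershgorin) $A_N$ is nonsingular, so $(x^N_j)$ exists and is unique.

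For a bound, let $|x^N_k|=\max_{j\le N}|x^N_j|$. Equation $k$ gives
\[
|a_{kk}||x^N_k|\le |b_k|+\sigma_k|a_{kk}||x^N_k|,
\qquad\text{so}\qquad
|x^N_k|\le \frac{|b_k|}{(1-\sigma_k)|a_{kk}|}.
\]
Condition (\ref{infinite_system_1}) gives $|a_{kk}|\to\infty$. Together with (\ref{infinite_system_2}), this yields $\sigma_i\le M/|a_{ii}|\to0$, so $\sup_i\sigma_i<1$ outside finitely many $i$. Hence $\sup_k |b_k|/((1-\sigma_k)|a_{kk}|)$ is finite, and there is $B$ with $\max_j|x^N_j|\le B$ for all $N$.

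\emph{Step 2: the sequences $\{x^N_j\}_N$ are Cauchy.} Fix $N<N'$ and set $e_j=x^{N'}_j-x^N_j$ for $j\le N$. For $i\le N$, subtracting the two systems gives
\[
\sum_{j\le N}a_{ij}e_j=-\sum_{j=N+1}^{N'}a_{ij}x^{N'}_j .
\]
We bound the right-hand side by $B\,R_i(N)$, where $R_i(N):=\sum_{j>N}|a_{ij}|$. Applying the max-entry argument of Step 1 to $e$ yields
\[
\max_{j\le N}|e_j|\le \max_{i\le N}\frac{B\,R_i(N)}{(1-\sigma_i)|a_{ii}|}.
\]

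This step is the main obstacle, because the bound must tend to $0$ as $N\to\infty$ uniformly in $i\le N$.
\begin{itemize}
\item For $i$ large, use $R_i(N)\le M$ and $|a_{ii}|\to\infty$.
\item For the finitely many small $i$, use that $R_i(N)\to0$ for each fixed $i$, by convergence of the row sums.
\end{itemize}
The max-entry estimate alone is too crude when the index achieving the max moves with $N$. If it fails, I would instead use a weighted estimate in the style of Shivakumar--Wong: compare $|e_j|$ row by row, bound $|e_j|\le \frac{1}{|a_{jj}|}\bigl(B R_j(N)+M\max|e|\bigr)$, and sum using (\ref{infinite_system_1}). Either way we conclude $\sup_{j\le N}|x^{N'}_j-x^N_j|\to0$, so each $x^N_j\to x_j$ with $|x_j|\le B$.

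\emph{Step 3: the limit solves the system.} In row $i$, split $\sum_j a_{ij}x^N_j$ into $j\le J$ and $J<j\le N$. The tail is at most $B\sum_{j>J}|a_{ij}|$, which is small for $J$ large. The head converges termwise as $N\to\infty$. Hence $\sum_j a_{ij}x_j=b_i$.

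\emph{Step 4: uniqueness.} If $z$ is the difference of two bounded solutions, then $Az=0$ with $z$ bounded. I would show $z_k\to0$ as $k\to\infty$: from row $k$, $|z_k|\le \sup|z|\cdot M/|a_{kk}|\to0$. Then $|z|$ attains its maximum at some index $k$, and strict dominance in row $k$ forces $|z_k|(1-\sigma_k)\le0$, so $z=0$.

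Here condition (\ref{infinite_system_3}) enters to justify interchanging sums, for example in the dual argument pairing $A$ with $\ell^1$ vectors built from $1/|a_{ii}|$, if the direct maximum argument needs it.
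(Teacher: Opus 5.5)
Your proof is correct and self-contained. The paper gives no argument of its own for this theorem; it simply quotes it from \cite{Shivakumar-Wong}.

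Your route applies a discrete maximum principle three times: to $x^N$ (Step 1), to the difference $x^{N'}-x^N$ of two arbitrary truncations (Step 2), and to the difference of two bounded solutions (Step 4). This actually gives slightly more than the statement claims.

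\begin{itemize}
\item Condition (\ref{infinite_system_1}) is used only through $|a_{ii}|\to\infty$. Together with (\ref{infinite_system_2}) this gives $\sigma_i\le M/|a_{ii}|\to 0$, hence $\bar\sigma:=\sup_i\sigma_i<1$.
\item Condition (\ref{infinite_system_3}) is never used. For bounded $z$ with $Az=0$, row $k$ gives $|z_k|\le\sigma_k\|z\|_\infty\le\bar\sigma\|z\|_\infty$, so $z=0$. Your Step 4 reaches the same conclusion via $z_k\to 0$.
\end{itemize}

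The summability in (\ref{infinite_system_1}) is what the other natural route needs. That route compares consecutive truncations: the max-entry bound gives $\max_{j\le N}|x^{N+1}_j-x^N_j|\le C/|a_{N+1,N+1}|$ with $C=(\sup_i|b_i|+MB)\,\bar\sigma/(1-\bar\sigma)$. Summing over $N$ then yields the Cauchy property. Your direct comparison of $x^N$ with $x^{N'}$ avoids that summation.

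You should, however, delete your two hedges, because the doubts they express are unfounded.

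In Step 2 there is no ``moving index'' problem. Your bound is a maximum over all $i\le N$ of quantities that do not depend on where $|e|$ peaks. Your two bullets already make it smaller than $\varepsilon$ uniformly in $N'>N$: first fix $I$ so that $BM/((1-\bar\sigma)|a_{ii}|)<\varepsilon$ for $i\ge I$, then take $N$ large enough that $R_i(N)$ is small for the finitely many $i<I$.

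The proposed fallback is unnecessary, and as written it would not close by itself. It bounds $|e_j|$ by $\max|e|$ with a coefficient $M/|a_{jj}|$, which need not be below $1$ for small $j$.

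Likewise, the maximum argument in Step 4 involves no interchange of summations. The closing remark invoking (\ref{infinite_system_3}) and a dual $\ell^1$ argument should therefore go.
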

(Even when conditions (\ref{strictlyDD})--(\ref{infinite_system_3}) hold and thus the infinite 
system (\ref{infinite_system}) has a unique bounded solution, it may also admit unbounded 
solutions as is shown in \cite{Shivakumar-Wong}. These solutions however do not arise as 
limits of the solution sequence of the truncated systems.)


\subsection{Markov chains on general state spaces} \label{sec:MCGeneralState}


Here we mention some basic concepts and definitions pertaining to 
the analysis of Markov chains in general state spaces. We refer the
reader to \cite{Tweedie} for an introduction and to \cite{MT} 
and \cite{Douc} for a detailed account of the theory.
Suppose that $\boldsymbol{\Phi}:=\{\Phi_n\}$, $n\in \bN$ is a Markov 
chain with state space ${\sf X}$ (which is a general set)  
equipped with a family of $\sigma$-fields of subsets of ${\sf X}$, $\cB({\sf X})$, 
and transition kernel $P(\cdot,\cdot):{\sf X}\times \cB({\sf X}) \rightarrow [0,1]$. 
For any $A \in \cB({\sf X}) $ let $\tau_A := \inf\{n \geq 1: \Phi_n \in A\}$ and 
$L(x,A) :=\bP_x(\tau_A < \infty)$. 

If there exists a measure $\phi$ on 
$\big({\sf X}, \cB({\sf X})\big)$ satisfying the condition 
$\phi(A) >0 \; \Rightarrow \; L(x,A)>0$  for all $A \in \cB({\sf X})$ and $x \in {\sf X}$ 
then $\phi$ is called an {\em irreducibility measure} for the chain. It turns
out that if an irreducibility measure exists for a chain $\{\Phi_n\}$ then
an {\em maximal irreducibility measure $\psi$} exists for this chain.
$\boldsymbol{\Phi}$ is then called $\psi$-irreducible, the measure $\psi$ is essentially unique, 
and it satisfies the following two conditions.
\begin{itemize}
	\item[(i)] $\psi(A)>0$ implies that $L(x,A)$ for every $x\in {\sf X}$ and $A \in \cB({\sf X})$
	\item[(ii)] If $\psi(A)=0$ then $\psi(\overline{A})=0$ where 
	$\overline{A}=\{x: L(x,A)>0\}$. Note that $\overline{A}$ is the set of points in ${\sf X}$ 
	from which the chain can reach $A$ with positive probability.
\end{itemize}

A set $C$ in $\cB({\sf X})$ is called {\em small}
if there exists a constant $\delta >0$ and a probability measure $\phi$  such that,
\begin{equation} \label{def:small}
\mbox{\em there exists $m\geq 1$ such that for every $x \in C$  and every 
$B \in \cB({\sf X})$, $P^m(x,B) \geq \delta \phi(B)$.}
\end{equation}
If there exists a small set which satisfies the above definition with $m=1$ then 
the chain is {\em strongly aperiodic} (\cite[p.114]{MT}).
Let $\cB^+ := \{ A \in  \cB({\sf X}) : \psi(A) >0\}$. 
When a small set $C$ is in $\cB^+$ then the measure $\phi$ in Definition (\ref{def:small})
is an irreducibility measure. Finally, a set $C \in \cB({\sf X})$ is {\em accessible} 
if $L(x,C) >0$ for all $x \in {\sf X}$. 

The following definition generalizes the concept of irreducibility from
Markov chains on a countable state space to chains on a general state space.
\begin{definition}\label{def:irreducible} A Markov kernel is $\psi$-irreducible if it 
admits an accessible small set.
\end{definition} 
This definition describes irreducibility in the general framework. For any $x$ in
the state space ${\sf X}$ and any set $B \in \cB$ of importance, in the sense that
$\phi(B)>0$, there exists $n \in \bN$ such that $\bP(\Phi_n\in B | \Phi_0=x)>0$.
Indeed, since $C$ is accessible, for given $x$ there exists $n_1$ such that 
$\bP(\Phi_{n_1}\in C | \Phi_0=x) >0$ and 
\begin{eqnarray*}
\bP(\Phi_{n_1+m}\in B | \Phi_0=x) &=& \int_{\sf X} \bP(\Phi_{n_1} \in dy | \Phi_0=x)
\bP(\Phi_{n_1+m}\in B | \Phi_{n_1}=y) \\
&\geq & \int_C \bP(\Phi_{n_1} \in dy | \Phi_0=x) \bP(\Phi_m\in B | \Phi_0=y) \\
&\geq & \delta \phi(B) \bP( \Phi_{n_1} \in C | \Phi_0=x) \;>\;0.
\end{eqnarray*}

We now examine the Markov chain of stage lengths of section \ref{sec:GQ} in this 
general framework.
\begin{proposition} The Markov chain $\boldsymbol{Y}$ of stage lengths with 
state space ${\sf X}=[0,\infty)$ equipped with the Borel $\sigma$-field $\cB$, and 
transition probability kernel $Q(x,A) = \int_A q(x,y)dy$ as defined in Proposition 
\ref{prop:Q} is $\psi$-irreducible and strongly aperiodic.
\end{proposition}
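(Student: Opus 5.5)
The plan is to exhibit a single set $C$ that is both small with $m=1$ and accessible. Definition \ref{def:irreducible} then gives $\psi$-irreducibility, and the $m=1$ minorization gives strong aperiodicity. The key observation is that the transition density (\ref{transition-density}) factors as $q(x,y)=h(x,y)\,g(y)$, where
\[
h(x,y) := \lambda x e^{-\lambda x \overline{G}(y)} + e^{-\lambda x}.
\]
This is a bounded-below multiple of the fixed density $g$, uniformly on bounded sets of $x$.

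First I bound $h$ from below. Since $0\le \overline{G}(y)\le 1$, we have $e^{-\lambda x\overline{G}(y)}\ge e^{-\lambda x}$, hence
\[
h(x,y)\ \ge\ (1+\lambda x)e^{-\lambda x}\ \ge\ e^{-\lambda x}.
\]
For $C=[0,c]$ with any fixed $c>0$, and for all $x\in C$ and $B\in\cB$,
\[
Q(x,B)=\int_B h(x,y)g(y)\,dy \;\ge\; e^{-\lambda c}\int_B g(y)\,dy \;=\; \delta\,\phi(B),
\]
where $\delta:=e^{-\lambda c}>0$ and $\phi(B)=\int_B g$ is the probability measure introduced earlier. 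This is (\ref{def:small}) with $m=1$, so $C$ is small. In fact the same bound with $c$ replaced by $x$ shows $Q(x,\cdot)\ge e^{-\lambda x}\phi(\cdot)$ for every $x$, so every bounded set is small.

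Next I check accessibility of $C$, i.e.\ $L(x,C)>0$ for every $x\in{\sf X}$. Since $L(x,C)\ge Q(x,C)\ge e^{-\lambda x}\phi([0,c])=e^{-\lambda x}G(c)$, it suffices that $G(c)>0$. I choose $c$ so that this holds, which is possible because $G(c)\to 1$ as $c\to\infty$. Then $C$ is an accessible small set, and by Definition \ref{def:irreducible} the chain is $\psi$-irreducible. Moreover $\phi(A)>0$ implies $Q(x,A)>0$ for all $x$, which confirms that $\phi$ is an irreducibility measure.

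For strong aperiodicity I need the small set with $m=1$ to belong to $\cB^+$. Since $\phi$ is an irreducibility measure and $\psi$ is maximal, $\phi\ll\psi$, so $\psi(C)>0$ follows from $\phi(C)=G(c)>0$. The $m=1$ minorization already established then yields strong aperiodicity in the sense of \cite[p.114]{MT}.

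I expect no genuine obstacle. The only point needing care is choosing $c$ with $G(c)>0$, which ensures both $\phi(C)>0$ and accessibility; taking $c$ large settles this. Everything else follows from the elementary inequality $e^{-\lambda x\overline{G}(y)}\ge e^{-\lambda x}$.
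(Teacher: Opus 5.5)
Your proposal is correct and follows essentially the same route as the paper. The bound $q(x,y)\ge e^{-\lambda a}g(y)$ for $x\in[0,a]$ gives a one-step minorization on $C_a=[0,a]$, and $C_a$ is then an accessible small set, which yields $\psi$-irreducibility and strong aperiodicity. Your explicit choice of $c$ with $G(c)>0$, needed both for accessibility and for $\delta\,\phi(C)>0$, is a point the paper leaves implicit when it asserts that $C_a$ is accessible for any $a>0$.
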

\begin{proof}
The measure $\phi$ defined on $({\sf X},\cB)$ via $\phi(A) = \int_A g(y)dy$ for all
$A \in \cB$ is an irreducibility measure for the Markov chain $\boldsymbol{Y}$. Indeed it is
easy to see that $\phi(A) >0$ implies $L(x,A)>0$. This implies that there exists a maximal
irreducibility measure $\psi$ and that $\boldsymbol{Y}$ is $\psi$-irreducible.
We next show that for any given $a>0$ the set $C_a:=[0,a]$ is a {\em small} set 
for the process $\boldsymbol{Y}$. Define the measure $\phi_a$ on $({\sf X}, \mathcal{B})$ 
via $\phi_a(A) := e^{-\lambda a} \int_A g(y) dy$ for any $A\in \cB$. Note that
\[
Q(x,A) \;=\; \int_A \left( \lambda x e^{-\lambda x \overline{G}(y)}
+ e^{-\lambda x} \right) g(y)dy \;\geq \; e^{- \lambda a} \int_A g(y)dy = 
\phi_a(A),  \hspace{0.3in} \mbox{for all $x \in C$}.
\]
and hence Definition (\ref{def:small}) is satisfied with $\delta = e^{-\lambda a}$, $m=1$. 
Therefore $C_a$ is a small set and furthermore, since $m=1$, the chain $\boldsymbol{Y}$
is strongly aperiodic. Finally the set $C_a$ is accessible and therefore, from Definition 
\ref{def:irreducible} $\boldsymbol{Y}$ is $\psi$-recurrent.  
\end{proof}

The following is Proposition 3.10 of Tweedie \cite[p. 840]{Tweedie}.
\begin{theorem} \label{prop:Tweedie3.10}
Let $P$ be the transition law of a $\psi$-irreducible, aperiodic Markov chain on ${\sf X}$.
The chain is positive recurrent with an invariant probability measure $\pi$ satisfying
$\int_{\sf X} h(x)\pi(dx)< \infty$ for a non-negative function $h$ if and only if there is
a non-negative function $V(x)$, finite $\pi$-almost everywhere, and a small set $C$ such
that 
\begin{equation}  \label{prop3.10}
\int_{\sf X} P(x,dy) V(y) \leq V(x) - h(x) + b \ind_C(x),\hspace{0.3in} x\in {\sf X}.
\end{equation}
\end{theorem}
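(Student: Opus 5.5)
This result is Proposition 3.10 of Tweedie, and the paper only cites it. Our plan is the standard comparison/regeneration argument for general state space chains, proving the two directions separately.

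\emph{Sufficiency.} Assume (\ref{prop3.10}) holds with $V\ge 0$ and a small set $C$. We first iterate the drift inequality along the chain. Put $\tau_C=\inf\{n\ge1:\Phi_n\in C\}$ and $M_n:=V(\Phi_n)+\sum_{k<n}h(\Phi_k)$. Then $M_{n\wedge\tau_C}$ is a nonnegative supermartingale, apart from the term $b\ind_C(\Phi_0)$ at time $0$. Optional stopping (with truncation at $n\wedge\tau_C$ and monotone convergence) gives the comparison bound
\[
\bE_x\sum_{k=0}^{\tau_C-1}h(\Phi_k)\;\le\;V(x)+b\ind_C(x).
\]
Taking $h\ge1$ in the same argument also shows that $\sup_{x\in C}\bE_x\tau_C<\infty$. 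We then pass from this uniform return-time bound on the small set to positive recurrence. Since $C$ is small and the chain is $\psi$-irreducible and aperiodic, we use Nummelin splitting: the minorization $P^m(x,\cdot)\ge\delta\phi(\cdot)$ on $C$ produces an atom $\check\alpha$ for the split chain. The expected return time to $\check\alpha$ is finite, because each visit to $C$ regenerates with probability $\delta$ at most $m$ steps later and the return times to $C$ are uniformly integrable in mean. The invariant probability is then given by the cycle formula
\[
\pi(A)=\frac{\bE_{\check\alpha}\sum_{k=0}^{\tau_{\check\alpha}-1}\ind_A(\Phi_k)}{\bE_{\check\alpha}\tau_{\check\alpha}}.
\]
Applying the same formula to $h$, and chaining the comparison bound over the excursions of $C$ inside one $\check\alpha$-cycle, gives $\pi(h)\le c\,\sup_{C}(V+b)$ up to constants. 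This requires $\sup_C V<\infty$. If that fails, we replace $C$ by $C\cap\{V\le n\}$, which is still small and, for large $n$, in $\cB^+$. For that we use that $V$ is finite $\pi$-a.e. (and, via the drift, $\psi$-a.e.).

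\emph{Necessity.} Suppose the chain is positive recurrent and $\pi(h)<\infty$. We construct $V$ directly from a small set $C\in\cB^+$ chosen so that $\sup_{x\in C}\bE_x\sum_{k=0}^{\tau_C-1}h(\Phi_k)<\infty$, that is, an ``$h$-regular'' small set. Such sets exist: by Kac's formula $\int_C\pi(dx)\,\bE_x\sum_{k<\tau_C}h(\Phi_k)=\pi(h)<\infty$, so the integrand is finite on a subset of positive measure, which we intersect with a small set. We then set $V(x):=\bE_x\sum_{k=0}^{\sigma_C}h(\Phi_k)$ with $\sigma_C=\inf\{n\ge0:\Phi_n\in C\}$. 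A one-step analysis gives $PV(x)=V(x)-h(x)$ for $x\notin C$, and $PV\le b$ on $C$ by the choice of $C$. Finiteness of $V$ $\pi$-a.e. follows again from the cycle representation of $\pi$.

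The main obstacle is the sufficiency step that converts the bound on $C$ into an honest regeneration argument. The minorization may occur only at lag $m>1$, and $V$ need not be bounded on $C$. Both technicalities are handled by the splitting construction and by refining $C$ to a sublevel set of $V$. In our application the chain $\boldsymbol{Y}$ is strongly aperiodic ($m=1$), which removes the first difficulty.
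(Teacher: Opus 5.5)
The paper does not prove this result; it quotes it from Tweedie. Your outline follows the standard Meyn--Tweedie argument, but as written it has genuine gaps.

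\textbf{Sufficiency.} The step ``taking $h\ge1$ in the same argument also shows that $\sup_{x\in C}\bE_x\tau_C<\infty$'' is not available to you. The drift inequality is assumed only for the given non-negative $h$, and where $h$ is small the comparison bound says nothing about $\tau_C$. This cannot be repaired, because the statement as written is false without a lower bound on $h$. Take the lazy symmetric random walk on $\bZ$: it is irreducible, aperiodic, has $\{0\}$ small, and is null recurrent. Yet $h\equiv0$, $V\equiv0$, $C=\{0\}$ satisfy (\ref{prop3.10}). You must add the hypothesis $h\ge1$, as in Meyn--Tweedie's $f$-norm ergodic theorem. In this direction you should also replace ``$V$ finite $\pi$-a.e.'' by ``$V(x_0)<\infty$ for some $x_0$'', since no $\pi$ exists yet; then $\{V<\infty\}$ is absorbing (because $PV\le V+b$), hence full. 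Your remedy when $\sup_C V=\infty$ also has a hole. The drift carries $\ind_C$, not $\ind_{C_n}$ with $C_n:=C\cap\{V\le n\}$, so the supermartingale argument only gives $\bE_x\sum_{k<\tau_{C_n}}h(\Phi_k)\le V(x)+b\,\bE_x\sum_{k<\tau_{C_n}}\ind_C(\Phi_k)$. You still have to bound the expected number of visits to $C$ before the chain enters $C_n$. That needs a separate argument based on the smallness of $C$; it is the content of Meyn and Tweedie's result that sublevel sets of $V$ are regular.

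\textbf{Necessity.} Here the missing piece is the existence of your ``$h$-regular'' small set. Kac's formula controls $g_C(x):=\bE_x\sum_{k<\tau_C}h(\Phi_k)$, a quantity tied to returns to $C$, and gives $g_C\le n$ on some $D\subset C$ of positive measure. But regularity of $D$ (or of $D$ intersected with a small set) concerns $\tau_D\ge\tau_C$. Before returning to $D$ the chain may make an uncontrolled number of excursions from points of $C\setminus D$, where $g_C$ is unbounded. So $\sup_{x\in D}\bE_x\sum_{k<\tau_D}h(\Phi_k)<\infty$ does not follow, and iterating the step only regresses.

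The missing idea is to apply Kac at the atom $\check{\alpha}$ of the split chain, where $\check{\bE}_{\check{\alpha}}\sum_{k=1}^{\tau_{\check{\alpha}}}h(\Phi_k)=\pi(h)/\check{\pi}(\check{\alpha})$ is a single finite number. The sublevel sets of $G(x):=\check{\bE}_x\sum_{k=0}^{\sigma_{\check{\alpha}}}h(\Phi_k)$ are then regular in a strong sense: the $h$-cost of reaching any $B\in\cB^+$ is bounded uniformly on them. Unlike your notion, this property is inherited by subsets, so intersecting with a small set in $\cB^+$ becomes legitimate.

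Finally, with such a $C$, take $V(x):=\bE_x\sum_{k=0}^{\sigma_C-1}h(\Phi_k)$, so that $V=0$ on $C$. Then $PV=V-h$ off $C$, and $PV\le V-h+b$ on $C$ with $b=\sup_{x\in C}\bE_x\sum_{k<\tau_C}h(\Phi_k)$. Your $V$, summing up to $\sigma_C$, leaves the terminal term $\bE_x h(\Phi_{\tau_C})$ in $PV$ on $C$, which your regularity condition does not bound.
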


We now examine specifically the Markov chain $\{Y_n\}$, $n\in \bN$, of consecutive
service stage lengths of Section \ref{sec:GQ} with state space ${\sf X} =\bR^+$ 
equipped with the Borel $\sigma$-field $\cB(\bR^+)$ (which in the
sequel we will abbreviate to $\cB$), and transition
kernel $Q(\cdot,\cdot):{\sf X}\times \cB \rightarrow [0,1]$ given in Proposition \ref{prop:Q}.

Using the above theorem we now proceed to give the proof of Proposition \ref{prop:pi-moments}.
\begin{proof}[Proof of Proposition \ref{prop:pi-moments}]
The state space of the chain is ${\sf X} = [0,\infty)$ and the kernel
$Q: {\sf X} \times \mathcal{B} \rightarrow [0,1]$ is given by (\ref{transition-density}).
We will use Theorem \ref{prop:Tweedie3.10} with $h(x)=x^n$ and $V(x) = Kx^n$ where $K>1$. 

If we also choose the small set $C_a:=[0,a]$ we need to show that, for some $b>0$, 
(\ref{prop3.10}) holds. The left hand side of (\ref{prop3.10}) is 
\begin{eqnarray}   \label{PV1}
\int_{0}^{\infty}  \left( \lambda x e^{-\lambda x \overline{G}(y)}
+ e^{-\lambda x} \right) g(y) Ky^n dy  &=& 
K\int_0^\infty \lambda x e^{-\lambda x \overline{G}(y)} g(y) y^n dy 
+ e^{-\lambda x}K\bE[\sigma^n_1] .
\end{eqnarray}
Since $\int_0^\infty y^n g(y) dy =\bE[\sigma^n_1] < \infty$ we can use the 
Dominated Convergence Theorem with dominating function $K\lambda y^ng(y)$
to conclude that
\begin{equation}  \label{PV2}
\lim_{x \rightarrow\infty} \int_0^\infty \lambda e^{-x \lambda \overline{G}(y)}
g(y) K y^n dy \;=\; \int_0^\infty \lim_{x \rightarrow\infty}  \lambda 
e^{-x \lambda \overline{G}(y)}g(y)Ky^ndy \;=\; 0.
\end{equation}
From (\ref{PV2}) it follows that 
\begin{equation} \label{PV3}
 \int_0^\infty \lambda e^{-x \lambda \overline{G}(y)} g(y)K y^n dy \; < \; m
\hspace{0.3in} \mbox{when $x \geq a$}
\end{equation} 
and therefore 
\begin{equation}  \label{PV4}
\int_0^\infty \lambda x e^{-\lambda x \overline{G}(y)} g(y) K y^n dy  \;\leq \; mx 
\hspace{0.3in} \mbox{for $x \geq a$.}
\end{equation}
Therefore, the left hand side of (\ref{PV2}) is bounded above by
\begin{eqnarray*}
\int_{0}^\infty  \left( \lambda x e^{-\lambda x \overline{G}(y)}
+ e^{-\lambda x} \right) g(y) K y^n dy \; \leq \; m x  + e^{-\lambda x} K \bE[\sigma^n_1] .
\end{eqnarray*}
Hence (\ref{prop3.10}) holds for $x \ne C$ if 
\[
mx + e^{-\lambda x} K \bE[\sigma_1^n]  \leq (K-1)x^n \hspace{0.3in} \mbox{for $x>a$}.
\]
This last inequality obviously holds when $n \geq 2$ if we take $a$ to be sufficiently large.
When $n=1$, again for $a$ sufficiently large $m$ is $m$ sufficiently small so that 
\[
(K-1- m) a > K \bE [\sigma_1].
\]
Therefore the chain has a unique invariant probability measure $\pi$ and we have 
for any Borel set of $\bR^+$ the equation
\[
\pi(B) \;=\; \int_{\bR^+} Q(x,B)  \pi(dx) \;=\;  
\int_{\bR^+} \Big( \int_B   q(x,y)dy \Big) \pi(dx) ;=\;  
\int_B   \Big(  \int_{\bR^+} q(x,y)\pi(dx) \Big) dy .
\]
This shows that $\pi$ is absolutely continuous with respect to the Lebesgue measure
on $\bR^+$ with density $f$ satisfying the relationship (\ref{invariant-density}).
\end{proof}




\begin{thebibliography}{12}

\bibitem{Aldousetal} David Aldous, Masakiyo Miyazawa and Tomasz Rolski (2001). 
On the Stability of a Batch Clearing System with Poisson Arrivals and Subadditive 
ServiceTimes {\em Journal of Applied Probability}, {\bf 38}, 3, 621--634

\bibitem{Asmussen}  S. Asmussen (2003). \textit{Applied Probability and
Queues,} 2nd edition, John Wiley.

\bibitem{AH}  Avi-Itzhak, B. and S. Halfin (1989). Response Times in Gated
M/G/1 Queues: The Processor-Sharing Case. {\it Queueing Systems} {\bf 4,} 3,
263-279.

\bibitem{Browne1}  Browne, S., E.G. Coffman, Jr., E.N. Gilbert, and P.E.
Wright (1992). Gated, Exhaustive, Parallel Service, {\it Probab. Engrg.
Inform. Sci.} {\bf 6}, 217-239.

\bibitem{Browne2}  Browne, S., E.G. Coffman Jr., E.N. Gilbert, and P.E.
Wright (1992). The Gated Infinite-Server Queue: Uniform Service Times. {\it
SIAM Journal on Applied Mathematics} {\bf 52,} 6, 1751-1762.

\bibitem{Douc} Douc R., E. Moulines, P. Priouret, P. Soulier (2018). {\em
Markov Chains.} Springer.

\bibitem{HCK} Holman, D.F.,   M.L. Chaudhry and B.R.K. Kashyap (1982). On the 
number in the system $GI^{X}/M/\infty$, {\em Sankhy\=a Ser. A} {\bf 44} Pt. 1 
294-297.

\bibitem{HCK2} D.F. Holman, M.L. Chaudhry and B.R.K. Kashyap (1983). On the 
service system $M^X/G/\infty$, {\em European J. Oper. Res.} {\bf 13} 142-145.

\bibitem{JS}  Jagerman, D.L. and B. Sengupta (1989). A functional equation
arising in a queue with a gating mechanism, {\it Probab. Engrg. Inform. Sci.} 
{\bf 3}, 417-433.

\bibitem{Lebedev1}
Lebedev A.V. (2003). The Gated Infinite-Server Queue with Unbounded Service 
Times and Heavy Traffic. {\em Problems of Information Transmission} 
{\bf 39}(3), 309-316.

\bibitem{Lebedev2}
Lebedev A.V. (2004). Gated Infinite-Server Queue with Heavy Traffic and Power 
Tail. {\em Problems of Information Transmission} {\bf 40}(3), 237-242.

\bibitem{LKT} L. Liu, B.R.K. Kashyap and J.G.C. Templeton (1990). On the 
$GI^X/G/\infty$ system, {\em J. Appl. Probab.} {\bf 27},  671-683.

\bibitem{LT} L. Liu and J.G.C. Templeton (1991). The $GR^X_n/G_n/\infty$ 
system: System size, {\em Queueing Systems} {\bf 8},  323-356.

\bibitem{MT0} H. Masuyama and T. Takine (2002). Analysis of an Infinite-Server 
Queue with Batch Markovian Arrival Streams, {\em Queueing Systems} {\bf 42,} 
269-296.

\bibitem{MT} Meyn, S.P. and R.L. Tweedie (2012). {\em Markov Chains and
Stochastic Stability,} 2nd ed. Cambridge University Press.

\bibitem{PM1} Pinotsi, D. and M.A. Zazanis (2004). Stability Conditions for 
Gated $M/G/\infty$ Queues, {\em Probability in the Engineering and 
Informational Sciences}, 18(1) 103-110.

\bibitem{RS} Rege, K.M. and B. Sengupta (1989). A Single Server Queue
with Gated Processor-Sharing Discipline. {\it Queueing Systems,} {\bf 4,} 3,
249-261.

\bibitem{Shivakumar-Wong} P. N. Shivakumar and R. Wong (1973). Linear Equations 
in Infinite Matrices, {\em Linear Algebra and Its Applications} 7, 53-62.

\bibitem{Shivakumar-SZ} P.N. Shivakumar, K.C. Sivakumar, and Yang Zhang (2016).
{\em Infinite Matrices and Their Recent Applications}, Springer, 2016.

\bibitem{Tweedie} R.L. Tweedie. (2000) Markov Chains: Structure and Applications 
in \emph{Handbook of Statistics} \textbf{19,} ed. D.N. Shanbhag and C.R. Rao, 
Elsevier Amsterdam, 817-851.

\bibitem{Knessl1} Tan X. and C. Knessl (1994). Heavy Traffic Asymptotics for
a Gated, Infinite-Server Queue with Uniform Service Times. {\it SIAM Journal
of Applied Mathematics,} {\bf 54}, 6, 1768-1779.

\end{thebibliography}
\end{document}